\newtheorem{theorem}{Theorem}[section]
\newtheorem{assumption}{Assumption}[section]
\newcommand{\N}{\mathbb{N}}
\newcommand{\R}{\mathbb{R}}
\newcommand{\C}{\mathbb{C}}
\newcommand{\grad}{\nabla}
\begin{document}

\begin{flushleft}
\Large 
\noindent{\bf \Large Approximation of the inverse scattering Steklov eigenvalues and the inverse spectral problem}
\end{flushleft}

\vspace{0.2in}

{\bf  \large Isaac Harris}\\
\indent {\small Department of Mathematics, Purdue University, West Lafayette, IN 47907 }\\
\indent {\small Email: \texttt{harri814@purdue.edu}}\\

\begin{abstract}
\noindent In this paper, we consider the numerical approximation of the Steklov eigenvalue problem that arises in inverse acoustic scattering. The underlying scattering problem is for an inhomogeneous isotropic medium. These eigenvalues have been proposed to be used as a target signature since they can be recovered from the scattering data. A Galerkin method is studied where the basis functions are the Neumann eigenfunctions of the Laplacian. Error estimates for the eigenvalues and eigenfunctions are proven by appealing to Weyl's Law. We will test this method against separation of variables in order to validate the theoretical convergence. We also consider the inverse spectral problem of estimating/recovering the refractive index from the knowledge of the Steklov eigenvalues. Since the eigenvalues are monotone with respect to a real-valued refractive index implies that they can be used for non-destructive testing. Some numerical examples are provided for the inverse spectral problem. 
\end{abstract}

\noindent {\bf Keywords}:  Steklov Eigenvalues $\cdot$  
Inverse Scattering $\cdot$ Galerkin Approximation $\cdot$ 
Error Estimates $\cdot$ Parameter Estimation  \\

\noindent {\bf MSC}: 35P25 $\cdot$ 35J30 $\cdot$ 65N30 $\cdot$ 65N15

\section{Introduction}
In this manuscript, we investigate the numerical approximation of a non-selfadjoint Steklov eigenvalue problem that arises in inverse acoustic scattering as well as the inverse spectral problem of estimating the material properties from the knowledge of the Steklov eigenvalues. 
A similar eigenvalue problem has been analyzed for the electromagnetic scattering problem in \cite{MaxwellStek}. The numerical method employed here is a Galerkin method where the basis functions are finitely many Neumann eigenfunctions of the Laplacian. In \cite{team-pancho} we see that the Neumann eigenfunctions of the Laplacian form a basis for the Sobolev space $H^1(D)$. Our convergence analysis of the Galerkin method will use the Weyl's asymptotic estimate for the Neumann eigenvalues. In \cite{ziteHarris} a similar method was used to approximate the zero-index transmission eigenvalues with a conductivity condition where finitely many Dirichlet eigenfunctions of the Laplacian are used as the approximation space. We will also numerically investigate the inverse spectral problem of estimating the refractive index from the Steklov eigenvalues. In our experiments we will see that the average value of the refractive index can be recovered numerically. 

The Steklov eigenvalue problem we consider here is associated with the direct scattering problem: find the total field $u \in H^1_{loc}(\R^d)$ for $d=2,3$ such that  
\begin{align}
\Delta u +k^2 n u=0  \quad \textrm{ in } \,\,\, \R^d \label{direct1}
\end{align}
with $u=u^s+u^i$. The incident field is given by $u^i=\text{e}^{\text{i} k x \cdot \hat{y} }$ where the incident direction $\hat{y}$ is a point on the unit circle/sphere. Here, we let $n \in L^{\infty} (\R^d)$ denote the refractive index with supp$(n-1)=\Omega$. We assume that the scatterer $\Omega \subset \R^d$ is a bounded simply connected open set with {\color{black}Lipschitz boundary}. The scattered field $u^s$ satisfies the Sommerfeld radiation condition 
\begin{align}
\lim\limits_{|x| \rightarrow \infty} |x|^{(d-1)/2} \left( \frac{\partial u^s}{\partial |x|} -\text{i}k u^s \right)=0 \label{src}
\end{align}
which is satisfied uniformly with respect to $\hat{x}=x/|x|$. Therefore, the  scattered field $u^s$ solving \eqref{direct1} and \eqref{src} has the asymptotic expansion as  $|x| \to \infty$ (see for e.g. \cite{cakoni-colton})
$$u^s(x,\hat{y})=\frac{\text{e}^{\text{i}k|x|}}{|x|^{(d-1)/2}} \left\{u^{\infty}(\hat{x}, \hat{y} ) + \mathcal{O} \left( \frac{1}{|x|}\right) \right\}$$
where $u^\infty$ denotes the {\color{black}`measured'} far-field pattern. Now, we define the corresponding far-field operator ${F}: L^2(\mathbb{S}) \longmapsto  L^2(\mathbb{S})$ by 
$$Fg(\hat{x})=\int_{\mathbb{S}} u^\infty(\hat{x},\hat{y}) g(\hat{y}) \, \text{d}s(\hat{y})$$
such that  $\mathbb{S}$ denotes the boundary of the unit circle/sphere. 
By only using the far-field operator $F$ one derives the associated {\color{black}transmission} eigenvalue problem (see for e.g. \cite{TE-book}). This is a non self-adjoint and nonlinear eigenvalues problem which makes the computation of these eigenvalues difficult. Therefore, in \cite{Aniso-stekloff} the authors augmented the inverse scattering problem with an auxiliary scattered field which leads to a linear eigenvalue problem. 

Now, just as in \cite{Aniso-stekloff} we define an auxiliary total field ${\color{black} u_\lambda \in H^1_{loc}(\R^d \setminus \overline{D})}$ with Im$(\lambda)\geq 0$ satisfying the system 
\begin{align}
\Delta u_\lambda +k^2 u_\lambda=0  \quad \textrm{ in } \,\,\, \R^d \setminus \overline{D}  \quad \text{and} \quad {\partial_\nu u_\lambda} + \lambda u_\lambda =0  \,\, \textrm{ on } \,\, \partial D \label{direct2} 
\end{align}
{\color{black}where $\nu$ is the unit outward normal vector on $\partial D$.} The auxiliary total field is given by $u_\lambda=u_\lambda^s+u^i$ and the auxiliary scattered field $u_\lambda^s$ also satisfies the Sommerfeld radiation condition \eqref{src}. Here, the region $D$ is taken to be any bounded simply connected open set with a {\color{black}$C^2-$boundary} such that $\Omega \subseteq D$. Similarly, the auxiliary scattered field $u_\lambda^s$ gives rise to the auxiliary far-field operator ${F}_\lambda: L^2(\mathbb{S}) \longmapsto  L^2(\mathbb{S})$ given by 
$$F_\lambda g(\hat{x})=\int_{\mathbb{S}} u_\lambda^\infty(\hat{x},\hat{y}) g(\hat{y}) \, \text{d}s(\hat{y}).$$
It is shown in \cite{Aniso-stekloff} that the modified far-field operator ${F} -{F}_\lambda$ is injective with a dense range if and only if $\lambda \in \C$ is not a Steklov eigenvalue for the scattering problem \eqref{direct1}. In \cite{Stek-invscat} it is shown that the knowledge of the modified far-field operator ${F} -{F}_\lambda$ can be used to recover the Steklov eigenvalues. Since $F$ is given by physical measurements and ${F}_\lambda$ can be computed numerically/analytically gives that the Steklov eigenvalues can be determined by the measurements. In \cite{Stek-invscat,Stekinvproblem} it is shown that the largest positive Steklov eigenvalue depends monotonically on a real-valued refractive index. This implies that the eigenvalue can be used as a target signature. Using this fact we will show that the Steklov eigenvalues can estimate a real-valued $n$.

We now define the inverse scattering Steklov eigenvalue problem associated with  \eqref{direct1}. These are defined as the values $\lambda \in \C$ with Im$(\lambda)\geq 0$ such that there is a nontrivial solution $w \in H^1(D)$ satisfying 
\begin{align}
\Delta w+k^2 n w = 0  \,\, \textrm{ in } \,\, {D} \quad \text{and} \quad {\partial_\nu w} + \lambda w =0  \,\, \textrm{ on } \,\, \partial D. \label{eig-problem} 
\end{align}
Recall, that $n \in L^{\infty}(D)$ such that supp$(n-1)=\Omega$  where the scatterer $\Omega \subseteq D$. Here $k>0$ denotes the wavenumber and for the non-selfadjoint case of an absorbing medium (i.e. $n$ is complex-valued) we have that 
$$n = n_{\text{R}} + \text{i}\frac{ n_{\text{I}} }{ k} \quad \text{with} \quad n_{\text{R}}>0 \,\, \text{and} \,\, n_{\text{I}} \geq 0.$$
This eigenvalue problem was introduced and studied in \cite{Aniso-stekloff,Stek-invscat} to overcome the shortcomings of the transmission eigenvalue problem that is obtained by only considering the injectivity of the far-field operator ${F}$. See \cite{gpinvtev} for a numerical method with the transmission eigenvalues for the inverse spectral problem. In \cite{SteknumericsSun} a continuous finite element method with a spectral indicator is used to approximate the Steklov eigenvalues. In \cite{StekDG} a discontinuous finite element method is used as an approximation scheme for this problem.  See \cite{spec-stek,VStek,SStek} for applications of other Galerkin methods applied to the selfadjoint Steklov eigenvalue problems. We also mention that this idea of augmenting the far-field operator $F$ by subtracting an auxiliary far-field operator has been employed in \cite{zite,Stektraceclass} to obtain new eigenvalue problems associated with  the scattering problem \eqref{direct1}.

The remainder of the paper is ordered as follows. We begin our investigation in the next section by defining the associated source problem for the Steklov eigenvalue problem. Next, we consider the approximation properties for the Neumann-Galerkin method's approximation space. Here we take our basis to be finitely many Neumann eigenfunctions for the Laplacian. Then we will study the convergence and prove error estimates for computing the Steklov eigenvalue and eigenfunctions. We will then provide some numerical examples in two dimensions for various scatterers. This will show that the proposed approximation is effective for computing the eigenvalues for a modest size discretized system. The numerical examples are given when $D$ is the unit circle but one can alternatively take a rectangular shaped domain. Lastly, we consider the inverse spectral problem of estimating the refractive index from the knowledge of the eigenvalues. 

\section{The Steklov Eigenvalue Problem}\label{problem-statement}
In this section, we will consider the variational formulation of the inverse scattering Steklov eigenvalue problem \eqref{eig-problem}. The analysis here will be used to prove the convergence of our approximation method. To begin, recall that the Sobolev space
$$H^1(D) =\big\{ \varphi \in L^2(D) \, : \, \partial_{x_i} \varphi  \in L^2(D) \,\,  \text{ for } \,\,  i =1, \cdots , d \big\}.$$
Now, by appealing to Green's First Theorem it is clear that the variational formulation of \eqref{eig-problem} is given by find $w \in H^1(D)$ such that 
\begin{align}
a(w,\varphi)= - \lambda b(w,\varphi)  \quad \text{ for all } \,\,\, \varphi \in H^1(D). \label{varform} 
\end{align} 
The bounded sesquilinear forms are defined by 
\begin{align}
a(w, \varphi)= \int\limits_{D} \grad w \cdot \grad \overline{\varphi} -k^2 n w \overline{\varphi} \, \text{d}x \quad \text{and } \quad 
b( w, \varphi)= \int\limits_{\partial D} {w} {\overline{\varphi}} \, \text{d}s. \label{forms}
\end{align} 
Since, the eigenfunction $w$ is assumed to be nontrivial we will assume that it is normalized with $\| w \|_{L^2(\partial D)}=1$. Note that $w \neq 0$ a.e. on $\partial D$ due to the impedance condition in \eqref{eig-problem}. Indeed, if not $w$ would have zero Cauchy data on $\partial D$ which would require $w=0$ by Green's Representation Theorem(see for e.g. \cite{coltonkress}). 

As in \cite{SteknumericsSun,StekDG} we will now define the associated Neumann-to-Dirichlet (NtD) operator for the source problem associated with \eqref{varform}. To this end, define the source problem: find $w \in H^1(D)$ such that for any $f \in L^2(\partial D)$
\begin{align}
a(w,\varphi)= b(f,\varphi)  \quad \text{ for all } \,\,\, \varphi \in H^1(D). \label{source} 
\end{align} 
It is clear that $w \in H^1(D)$ satisfies the boundary value problem 
$$\Delta w+k^2 n w = 0  \,\, \textrm{ in } \,\, {D} \quad \text{and} \quad {\partial_\nu w} =  f   \,\, \textrm{ on } \,\, \partial D.$$
Assuming that $k$ is not an associated Neumann eigenvalue for the differential operator $\Delta +k^2 n$ in $D$ we have that the source problem \eqref{source}  is well-posed(see for e.g. \cite{coltonkress,SteknumericsSun}). Therefore, we can define the NtD operator associated with source problem \eqref{source} as $T: L^2(\partial D) \longmapsto L^2(\partial D)$ such that 
\begin{align}
Tf = w \big|_{\partial D} \quad \text{ where } \,\,  w  \in H^1(D) \,\, \text{ solves \eqref{source} for } \,\, f \in L^2(\partial D). \label{NtD} 
\end{align} 
By the Trace Theorem(see for e.g. \cite{evans}) we have that Range$(T) \subseteq H^{1/2}(\partial D)$ and the compact embedding of $H^{1/2}(\partial D)$ into $L^2(\partial D)$(see for e.g. \cite{cakoni-colton}) implies that $T$ is a compact operator. Now, let $\tau \in \C$ be an eigenvalue of $T$ with corresponding eigenfunction $w$, then by \eqref{source} we have that 
$$T w \big|_{\partial D} = - \lambda^{-1}  w \big|_{\partial D} \quad \text{ which implies that } \quad  \tau = - \lambda^{-1}.$$
Note, that $\tau \neq 0$ provided that $k$ is not a Dirichlet eigenvalue for the differential operator $\Delta +k^2 n$ in $D$. 

\begin{assumption}\label{assume}
The wave number $k \in \R$ is not a Dirichlet or Neumann eigenvalue for the differential operator $\Delta +k^2 n$ in $D$. 
\end{assumption}

Notice, that assumption \ref{assume} is not restrictive since the set of Dirichlet or Neumann eigenvalues is discrete which gives that any choice of wavenumber $k$ is almost surely not an associated eigenvalue. Also, if $n$ is complex-valued then there are no real  Dirichlet or Neumann eigenvalues.

\section{Analysis of the Approximation}\label{conv-analysis} 
Here we analyze the proposed approximation method of the variational formulation \eqref{varform} of the inverse scattering Steklov eigenvalue problem. The method proposed here will be referred to as a Neumann-Galerkin method. This is a Galerkin method where the basis functions are taken to be a finite number of Neumann eigenfunctions for the Laplacian. The basis functions are denoted $\phi_j \in H^1(D)$ with the corresponding Neumann eigenvalues $\sigma_j \in \R_{\geq 0}$ that satisfy   
\begin{align}
- \Delta \phi_j = \sigma_j \phi_j \,\, \text{ in } \,\, D,  \quad {\partial_\nu \phi_j} =0  \,\, \textrm{ on } \,\, \partial D \quad \text{ with }  \quad  \| \phi_j\|_{L^2(D)}=1. \label{neumann-eig}
\end{align}
Here, we assume that the sequence $\sigma_j$ is arranged in increasing order. 
 \subsection{Analysis of the Approximation Space}\label{approx}
Now, we will analyze the approximation space given by 
$$V_N (D) = \text{span}\big\{ \phi_j\big\}_{{j=1}}^{N} \quad \text{ for some fixed } \quad N \in \N.$$
We begin by studying the approximation properties of the finite dimensional subspace $V_N (D) \subset H^1(D)$. It is well-known that the eigenfunctions $\{ \phi_j \}_{_{j=1}}^{^\infty}$ form an orthonormal basis of $L^2(D)$ and that for any $f \in H^1(D)$ 
\begin{align}
f = \sum\limits_{j=1}^{\infty} (f,\phi_j)_{L^2(D)} \phi_j \quad \text{ with } \quad \big\|f \big\|^2_{H^1(D)} =  \sum\limits_{j=1}^{\infty} (1+\sigma_j ) \big|(f,\phi_j)_{L^2(D)}\big|^2 < \infty \label{fourier-series}
\end{align}
by the results in Chapter 9 of \cite{team-pancho}. The series representation \eqref{fourier-series} along with Weyl's law will be used to show the approximation rates for the space $V_N (D)$. Recall, Weyl's law(see for e.g. \cite{weyl-law} {\color{black} equation (1.32)} as well as \cite{weyl-law2,weyl-law3}) gives that there exists two constants $c_1 , c_2 >0$ independent of $j$ such that 
 $$c_1 j^{2/d} \leq \sigma_j \leq c_2 j^{2/d}\quad \text{ for } \quad j\gg1 $$
where again the dimension $d=2,3$. Now, we define the $L^2(D)$ projection onto the approximation space $V_N (D)$  by $\Pi_N :L^2(D) \longmapsto V_N (D)$ such that 
 $$\Pi_N f =  \sum\limits_{j=1}^{N}  (f,\phi_j)_{L^2(D)} \phi_j \quad \text{for some fixed} \quad N \in \N$$ 
 for all $f \in L^2(D)$. By \eqref{fourier-series} we have the {\color{black}norm} convergence
 $$\big\| (I-\Pi_N)f \big\|^2_{H^1(D)} =  \sum\limits_{j=N+1}^{\infty} (1+\sigma_j ) \big|(f,\phi_j)_{L^2(D)}\big|^2 \to 0 \quad \text{as} \quad N \to \infty$$
 for any $f \in H^1(D)$. We now prove some convergence rates in the approximation space $V_N (D)$. This will give that the approximation space has sufficient approximation properties for our Galerkin method.  For the rest of the paper $C$ will be an arbitrary positive constant that does not depend on parameter $N \in \N$.

\begin{theorem}\label{L2convrate}
For any $f \in H^1(D)$ we have the estimate 
$$\big\| (I-\Pi_N) f \big\|_{L^2(D)} \leq \frac{C}{(N+1)^{1/d}} \| f \|_{H^1(D)} \quad \text{ as} \quad N \to \infty.$$
\end{theorem}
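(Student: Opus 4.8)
The plan is to exploit the two facts already assembled in the excerpt: the Parseval-type identity \eqref{fourier-series} for the Neumann eigenbasis, and the lower Weyl bound $\sigma_j \geq c_1 j^{2/d}$. Because $\{\phi_j\}_{j=1}^\infty$ is orthonormal in $L^2(D)$, the $L^2$ projection error is simply the tail of the Fourier series, so I would first record the identity
$$\big\| (I-\Pi_N) f \big\|^2_{L^2(D)} = \sum_{j=N+1}^{\infty} \big|(f,\phi_j)_{L^2(D)}\big|^2.$$
The entire proof hinges on comparing this $L^2$ tail against the weighted $H^1$ tail appearing in \eqref{fourier-series}, in which each coefficient carries the extra factor $(1+\sigma_j)$.

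The key manipulation is to insert and extract that weight. For each $j \geq N+1$ I would write $|(f,\phi_j)_{L^2(D)}|^2 = (1+\sigma_j)^{-1}(1+\sigma_j)|(f,\phi_j)_{L^2(D)}|^2$ and then use that the sequence $\sigma_j$ is arranged in increasing order, so that $1+\sigma_j \geq 1+\sigma_{N+1}$ for every index in the tail. This lets me pull the smallest weight out of the sum:
$$\big\| (I-\Pi_N) f \big\|^2_{L^2(D)} \leq \frac{1}{1+\sigma_{N+1}} \sum_{j=N+1}^{\infty} (1+\sigma_j)\big|(f,\phi_j)_{L^2(D)}\big|^2 \leq \frac{1}{1+\sigma_{N+1}}\, \| f \|^2_{H^1(D)},$$
where the last inequality is just \eqref{fourier-series} applied to the full (larger) sum. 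This is the heart of the argument; everything afterward is quantifying $\sigma_{N+1}$.

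Finally I would invoke Weyl's law. For $N+1 \gg 1$ the lower bound gives $\sigma_{N+1} \geq c_1 (N+1)^{2/d}$, hence $(1+\sigma_{N+1})^{-1} \leq \sigma_{N+1}^{-1} \leq c_1^{-1}(N+1)^{-2/d}$. Substituting this and taking square roots yields
$$\big\| (I-\Pi_N) f \big\|_{L^2(D)} \leq \frac{c_1^{-1/2}}{(N+1)^{1/d}}\, \| f \|_{H^1(D)},$$
so the stated estimate holds with $C = c_1^{-1/2}$, consistent with the convention that $C$ is independent of $N$. I do not anticipate a serious obstacle here: the only point requiring care is that Weyl's law is asserted only for $j \gg 1$, which is precisely why the conclusion is phrased as $N \to \infty$; for the finitely many small indices where the asymptotic bound may fail, the constant $C$ can simply be enlarged to absorb them, which does not affect the claimed rate.
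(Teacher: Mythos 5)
Your proof is correct and follows essentially the same route as the paper's: write the $L^2$ projection error as the Fourier tail, pull out the smallest weight over the tail, and invoke Weyl's lower bound $\sigma_{N+1}\gtrsim (N+1)^{2/d}$ together with \eqref{fourier-series}. The only (cosmetic) difference is that you factor out $(1+\sigma_{N+1})^{-1}$ whereas the paper factors out $\sigma_{N+1}^{-1}$ and then bounds $\sigma_j \leq 1+\sigma_j$; the two manipulations are interchangeable.
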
 
\begin{proof}
By the definition of the projection operator $\Pi_N$ we have that 
\begin{align*}
\big\| (I-\Pi_N) f \big\|^2_{L^2(D)}  &= \sum\limits_{j=N+1}^{\infty}  \big|(f,\phi_j)_{L^2(D)}\big|^2 \\
						   &\leq \sigma_{N+1}^{-1} \sum\limits_{j=N+1}^{\infty} \sigma_j \big|(f,\phi_j)_{L^2(D)}\big|^2 \\
						   &\leq \frac{C}{(N+1)^{2/d}} \sum\limits_{j=1}^{\infty} (1+\sigma_j) \big|(f,\phi_j)_{L^2(D)}\big|^2						   
 \end{align*}
 provided that $N$ is large enough. Note that we have used Weyl's law for the Neumann eigenvalues. This proves the result by \eqref{fourier-series}. 
 \end{proof}

 \begin{theorem}\label{H2convrate}
For any $f \in H^2(D)$ such that $\partial_{\nu} f = 0$ on $\partial D$ we have the estimate 
$$\big\| (I-\Pi_N) f \big\|_{H^1(D)} \leq \frac{C}{(N+1)^{1/d}} \| f \|_{H^2(D)} \quad \text{ as} \quad N \to \infty.$$
\end{theorem}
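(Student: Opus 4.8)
The plan is to follow the template of Theorem~\ref{L2convrate}, but to extract an \emph{extra} power of the eigenvalue from the tail of the Fourier series and then absorb it into the $H^2$ norm of $f$ using the compatibility condition $\partial_\nu f = 0$. Writing $c_j = (f,\phi_j)_{L^2(D)}$, the norm identity in \eqref{fourier-series} gives
$$\big\| (I-\Pi_N) f \big\|^2_{H^1(D)} = \sum_{j=N+1}^{\infty} (1+\sigma_j) |c_j|^2.$$
Since the eigenvalues are arranged in increasing order, for $j \geq N+1$ we have $1+\sigma_j \geq 1+\sigma_{N+1}$, so I would rewrite each term as $(1+\sigma_j)|c_j|^2 = \tfrac{(1+\sigma_j)^2}{1+\sigma_j}|c_j|^2$ and bound the tail by
$$\sum_{j=N+1}^{\infty} (1+\sigma_j) |c_j|^2 \leq \frac{1}{1+\sigma_{N+1}} \sum_{j=1}^{\infty} (1+\sigma_j)^2 |c_j|^2,$$
after which Weyl's law replaces $(1+\sigma_{N+1})^{-1}$ by $C(N+1)^{-2/d}$ for $N$ large enough, exactly as in the previous theorem.

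The heart of the argument is the identity $\sigma_j c_j = (-\Delta f, \phi_j)_{L^2(D)}$, valid for every $j$. First I would use that the $\phi_j$ may be taken real-valued together with \eqref{neumann-eig} to write $\sigma_j c_j = (f,-\Delta \phi_j)_{L^2(D)}$, and then apply Green's second identity over $D$. This produces two boundary contributions, $\int_{\partial D} \phi_j \,\partial_\nu f \, \text{d}s$ and $\int_{\partial D} f \,\partial_\nu \phi_j \, \text{d}s$; the first vanishes by the hypothesis $\partial_\nu f = 0$ and the second vanishes by the Neumann condition $\partial_\nu \phi_j = 0$ in \eqref{neumann-eig}. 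What remains is precisely $\sigma_j c_j = (-\Delta f, \phi_j)_{L^2(D)}$, i.e. $\sigma_j c_j$ is the $j$-th Fourier coefficient of $-\Delta f \in L^2(D)$ (here $f \in H^2(D)$ and the $C^2$ regularity of $\partial D$ guarantee both that $\Delta f \in L^2(D)$ and that Green's identity is licit).

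With this identity in hand, the remaining series is controlled by Parseval. Expanding $(1+\sigma_j)^2 = 1 + 2\sigma_j + \sigma_j^2$ and using that $\{\phi_j\}$ is an orthonormal basis of $L^2(D)$,
$$\sum_{j=1}^{\infty} (1+\sigma_j)^2 |c_j|^2 = \| f \|^2_{L^2(D)} + 2\sum_{j=1}^{\infty} \sigma_j |c_j|^2 + \| \Delta f \|^2_{L^2(D)},$$
where the middle sum equals $\| \grad f \|^2_{L^2(D)}$ by \eqref{fourier-series} and the final term comes from the identity above together with Parseval. Each of these three pieces is dominated by $\| f \|^2_{H^2(D)}$, giving $\sum_{j=1}^{\infty}(1+\sigma_j)^2|c_j|^2 \leq C \| f \|^2_{H^2(D)}$; combining this with the tail estimate and taking square roots yields the claim. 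The one delicate point is the vanishing of the boundary terms in Green's identity: the Neumann condition on $\phi_j$ alone does not suffice, and it is exactly the assumption $\partial_\nu f = 0$ that eliminates the remaining boundary integral and lets $\sigma_j c_j$ be identified with the Fourier coefficient of the genuine $L^2$ function $-\Delta f$ rather than a boundary distribution. This is also why the weaker hypothesis $f \in H^2(D)$ without the compatibility condition would not close the argument.
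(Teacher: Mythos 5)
Your proposal is correct and follows essentially the same route as the paper's proof: both hinge on Green's second identity together with the two Neumann conditions to identify $\sigma_j (f,\phi_j)_{L^2(D)}$ as the Fourier coefficient of $-\Delta f$, then apply Weyl's law to the tail and Parseval to bound the resulting weighted series by $\|f\|_{H^2(D)}^2$. The only difference is bookkeeping—you carry the weight $(1+\sigma_j)^2$ and split it into three Parseval terms, while the paper bounds $1+\sigma_j \leq 2\sigma_j$ for large $j$ and works with $\sum_j \sigma_j^2 |(f,\phi_j)_{L^2(D)}|^2 = \|\Delta f\|_{L^2(D)}^2$ directly—which changes nothing essential.
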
 
\begin{proof} 
To begin, we notice that $\Delta f \in L^2(D)$ and since $\{ \phi_j \}_{_{j=1}}^{^\infty}$ is an orthonormal basis of $L^2(D)$ we have that 
$$ \Delta f = \sum\limits_{j=1}^{\infty} (\Delta f,\phi_j)_{L^2(D)} \phi_j. $$
By Green's Second Theorem we derive that 
 $$(\Delta f,\phi_j)_{L^2(D)}= ( f,\Delta \phi_j)_{L^2(D)} = - \sigma_j ( f, \phi_j)_{L^2(D)}$$
 where we have used \eqref{neumann-eig} as well as the zero Neumann condition for $f$. Therefore, we can conclude that 
 $$ \|\Delta f \|^2_{L^2(D)} = \sum\limits_{j=1}^{\infty} \sigma_j^{2} \big|(f,\phi_j)_{L^2(D)}\big|^2 <\infty. $$
Now, as in the previous result we will use Weyl's law for the Neumann eigenvalues. To this end, we estimate 
\begin{align*}
\big\| (I-\Pi_N) f \big\|^2_{H^1(D)}  &\leq  2 \sum\limits_{j=N+1}^{\infty}  \sigma_j \big|(f,\phi_j)_{L^2(D)}\big|^2 \\
						   &\leq 2 \sigma_{N+1}^{-1} \sum\limits_{j=N+1}^{\infty} \sigma_j^2 \big|(f,\phi_j)_{L^2(D)}\big|^2 \\
						   &\leq \frac{C}{(N+1)^{2/d}} \sum\limits_{j=1}^{\infty} \sigma_j^2 \big|(f,\phi_j)_{L^2(D)}\big|^2						   
 \end{align*}
 provide that $N$ is large enough. This proves the claim. 
 \end{proof}
 
Being motivated by the proof of Theorem \ref{H2convrate} we define a subset of $L^2(D)$ denoted by $\mathscr{D} ( \Delta^m )$ for some $m \in \R_{\geq 0}$ such that 
\begin{align}
\| f \|^2_{\mathscr{D} ( \Delta^m )} = \sum\limits_{j=1}^{\infty} \sigma_j^{2m} \big|(f,\phi_j)_{L^2(D)}\big|^2 <\infty. \label{laplacian-m}
\end{align} 
It is clear that $\mathscr{D} \big( \Delta^m \big)$ is a Hilbert space with norm given by equation \eqref{laplacian-m}. If $m$ is a positive integer this subspace of $L^2(D)$ can be seen as the space of functions where the $m$th Laplacian applied to the series representation \eqref{fourier-series} is a convergent series in $L^2(D)$.  We now prove a convergence rate for $f \in H^1(D) \cap \mathscr{D} \big( \Delta^m \big)$ for $m > 1/2$. 

\begin{theorem}\label{specconvrate}
For any $f \in H^1(D) \cap \mathscr{D} \big( \Delta^m \big)$ such that $m > 1/2$ we have the estimate 
$$\big\| (I-\Pi_N) f \big\|_{H^1(D)} \leq \frac{C}{(N+1)^{(2m-1)/d}} \| f \|_{\mathscr{D} ( \Delta^m )} \quad \text{ as} \quad N \to \infty.$$
\end{theorem}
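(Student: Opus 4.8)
The plan is to mirror the proof of Theorem \ref{H2convrate}, replacing the single power of $\sigma_j$ appearing there by an interpolation-type splitting that exploits membership in $\mathscr{D}(\Delta^m)$. Writing $a_j = (f,\phi_j)_{L^2(D)}$, I would begin with the tail expansion of the $H^1$ norm, namely $\|(I-\Pi_N)f\|^2_{H^1(D)} = \sum_{j=N+1}^\infty (1+\sigma_j)|a_j|^2$, and use that $\sigma_j \to \infty$ so that $1+\sigma_j \leq 2\sigma_j$ once $j$ is large, bounding the sum by $2\sum_{j=N+1}^\infty \sigma_j |a_j|^2$.

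The key algebraic step is to factor $\sigma_j = \sigma_j^{1-2m}\,\sigma_j^{2m}$. Since the hypothesis $m > 1/2$ makes the exponent $1-2m$ strictly negative, the map $t \mapsto t^{1-2m}$ is decreasing on $(0,\infty)$; because the eigenvalues are arranged in increasing order, every index $j \geq N+1$ in the tail satisfies $\sigma_j \geq \sigma_{N+1} > 0$ (here $\sigma_{N+1} > 0$ as soon as $N \geq 1$, the only vanishing Neumann eigenvalue being $\sigma_1 = 0$), hence $\sigma_j^{1-2m} \leq \sigma_{N+1}^{1-2m}$. Pulling this uniform bound out of the sum gives $\sum_{j=N+1}^\infty \sigma_j |a_j|^2 \leq \sigma_{N+1}^{1-2m}\sum_{j=N+1}^\infty \sigma_j^{2m}|a_j|^2 \leq \sigma_{N+1}^{1-2m}\|f\|^2_{\mathscr{D}(\Delta^m)}$ by the definition \eqref{laplacian-m}.

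To finish, I would invoke Weyl's law in the form $\sigma_{N+1} \geq c_1(N+1)^{2/d}$; raising this to the negative power $1-2m$ reverses the inequality and yields $\sigma_{N+1}^{1-2m} \leq c_1^{1-2m}(N+1)^{2(1-2m)/d} = c_1^{1-2m}(N+1)^{-2(2m-1)/d}$. Collecting constants and taking square roots then produces the claimed bound with $C = \sqrt{2}\,c_1^{(1-2m)/2}$. As a consistency check, the case $m=1$ recovers Theorem \ref{H2convrate} with rate $(N+1)^{-1/d}$, since $\mathscr{D}(\Delta)$ is exactly the space on which $\|\Delta f\|_{L^2(D)}$ is finite.

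I do not anticipate a serious obstacle, as the argument is structurally the same estimate as in Theorem \ref{H2convrate}. The only point requiring genuine care is the role of the hypothesis $m > 1/2$: it is precisely this condition that simultaneously makes $\sigma_j^{1-2m}$ decreasing (so the tail is controlled by its first term $\sigma_{N+1}^{1-2m}$ via eigenvalue monotonicity) and makes the resulting exponent $(2m-1)/d$ positive, so that the estimate is a genuine decay rate rather than a growth bound. One should also keep track of where Weyl's law is actually valid, namely only for $j \gg 1$, so the inequalities are asserted for $N$ sufficiently large, exactly as in the preceding theorems.
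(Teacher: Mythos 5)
Your proposal is correct and follows essentially the same route as the paper's own proof: bound $1+\sigma_j$ by $2\sigma_j$ in the tail, factor $\sigma_j = \sigma_j^{1-2m}\sigma_j^{2m}$ and pull out $\sigma_{N+1}^{1-2m}$ using monotonicity of the eigenvalues and $m>1/2$, then apply Weyl's law to convert $\sigma_{N+1}^{-(2m-1)}$ into the rate $(N+1)^{-2(2m-1)/d}$. The only additions are harmless bookkeeping (positivity of $\sigma_{N+1}$, explicit constants) that the paper leaves implicit.
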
 
\begin{proof}
To prove the claim, recall we have that for $N$ sufficiently large 
 $$\big\| (I-\Pi_N) f \big\|^2_{H^1(D)} \leq 2 \sum\limits_{j=N+1}^{\infty} \sigma_j \big|(f,\phi_j)_{L^2(D)}\big|^2 \leq \frac{2}{{\sigma_{N+1}^{(2m-1)}}}\sum\limits_{j=N+1}^{\infty} \sigma_j^{2m} \big|(f,\phi_j)_{L^2(D)}\big|^2$$
 where we have used the series representation. Now, by again appealing to Weyl's law we conclude that 
 $$\big\| (I-\Pi_N)f \big\|^2_{H^1(D)}  \leq \frac{C}{(N+1)^{2(2m-1)/d}}\sum\limits_{j=1}^{\infty} \sigma_j^{2m} \big|(f,\phi_j)_{L^2(D)}\big|^2 $$
 proving the estimate by the definition of $ \mathscr{D} \big( \Delta^m \big)$.
 \end{proof}

\subsection{Analysis of the Spectral Approximation}\label{conv-eig}
Here we prove the convergence and error estimates for the Neumann-Galerkin approximation method for computing the inverse scattering Steklov eigenvalues. The analysis in this section uses the approximation properties of the space $V_N(D)$. 

To begin, let the trace space of $V_N(D)$ be denoted 
$$V_N(\partial D) = \big\{ f_N \in L^2(\partial D) \, : \, f_N = w_N |_{\partial D} \,\, \text{where} \,\, w_N \in V_N(D) \big\} \subset L^2(\partial D).$$
We now define the Neumann-Galerkin approximation of the NtD mapping as the operator $T_N :L^2(\partial D) \longmapsto V_N(\partial D)$ such that  $w_N \in V_N (D)$ satisfies 
\begin{align}\label{spec-source}
a(w_N ,\varphi_N )= b( f ,\varphi_N)  \quad \text{ for all } \,\,\, \varphi_N \in V_N (D)  \quad \text{ where } \quad T_N f = w_N \big|_{\partial D} .
\end{align} 
It is clear that if \eqref{spec-source} is well-posed then $T_N$ is a well defined compact operator. The goal now is to prove the well-posedness of the discrete source problem \eqref{spec-source}. 

In \cite{SteknumericsSun} it is shown that the sesquilinear form $a( \cdot \, ,\cdot ) + \alpha \,  (\cdot \, , \cdot)_{L^2(D)}$ is coercive on $H^1(D)$
 for $\alpha>0$ sufficiently large. This implies that \eqref{spec-source} is Fredholm of index zero by appealing to the compact embedding of {\color{black}$H^1(D)$ into $L^2(D)$}. Therefore, we can conclude that uniqueness implies well-posedness. We now use a duality argument to prove uniqueness. To this end, we define $u \in H^1(D)$ to be the unique solution to 
$$\Delta u+k^2 \overline{n} u = (w_N-w)  \,\, \textrm{ in } \,\, {D} \quad \text{and} \quad {\partial_\nu u} = 0  \,\, \textrm{ on } \,\, \partial D$$
where $w \in H^1(D)$ is the solution to \eqref{source} and $w \in V_N(D)$ is a solution to  \eqref{spec-source}. By elliptic regularity(see for e.g. \cite{evans} {\color{black}page 334}) we have that the solution $u \in H^2(D)$ and satisfies the regularity estimate 
$$ \| u\|_{H^2(D)} \leq C \| w-w_N \|_{L^2(D)}.$$
Therefore, by appealing to Green's First Theorem we have that 
\begin{align*}
\| w-w_N \|^2_{L^2(D)} & = a(w-w_N ,u ) \\
				   & = a \big(w-w_N , u -\Pi_N u \big) \quad \text{by Galerkin orthogonality} \\
			           &\leq C \| w-w_N \|_{H^1(D)} \| u -\Pi_N u \|_{H^1(D)}.
 \end{align*}
By using Theorem \ref{H2convrate} and the regularity estimate we have that			           
\begin{align}\label{source-estimate1}
\| w-w_N \|_{L^2(D)} \leq \frac{C}{(N+1)^{1/d}} \| w-w_N \|_{H^1(D)}.
\end{align}
Now, using the fact that $a( \cdot \, ,\cdot ) + \alpha \,  (\cdot \, , \cdot)_{L^2(D)} $ is coercive on $H^1(D)$ along with the  Galerkin orthogonality and inequality  \eqref{source-estimate1} we have the estimates 
\begin{align*}
\| w-w_N \|^2_{H^1(D)}&  \leq C \big| a(w-w_N ,w-w_N ) +\alpha \| w-w_N \|^2_{L^2(D)} \big| \\
				   & = C \big| a(w-w_N ,w-\Pi_N w ) +\alpha \| w-w_N \|^2_{L^2(D)} \big|  \\
			           &\leq C \| w-w_N \|_{H^1(D)} \| w -\Pi_N w \|_{H^1(D)}  + \frac{C}{(N+1)^{2/d}} \| w-w_N \|^2_{H^1(D)}.
 \end{align*}
This implies that for $N$ sufficiently large we have the estimate 
\begin{align}\label{source-estimate2}
\| w-w_N \|_{H^1(D)} \leq C \| w -\Pi_N w \|_{H^1(D)}.
\end{align}
Now, if we let the source $f=0$ then the well-posedness of \eqref{source} implies that $w=0$ and we conclude that $w_N=0$ due to inequality \eqref{source-estimate2}. This implies that the approximation of the NtD mapping $T_N$ is a well define compact operator. 

Now, define the Neumann-Galerkin approximation of the inverse scattering Steklov eigenvalue problem \eqref{varform} to be given by: find the values $ \lambda_N \in \C$ and nontrivial $w_N \in V_N(D)$ that satisfies the variational equality  
\begin{align}
a(w_N ,\varphi_N )= - \lambda_N b(w_N,\varphi_N)  \quad \text{ for all } \,\,\, \varphi_N \in V_N (D). \label{spec-varform} 
\end{align} 
Therefore, just as in Section \ref{problem-statement} we have that $\lambda_N \neq 0$ satisfies \eqref{spec-varform} provided that 
$$T_N w_N \big|_{\partial D} = - \lambda^{-1}_N  w_N \big|_{\partial D} \quad \text{ where } \quad \| w_N\|_{L^2(\partial D)} =1.$$
In order to prove the convergence of the approximation we can use the classical results in \cite{babuska-osborn,osborn}. To this end, we are now ready to prove that the approximation $T_N$ converges to $T$ in norm. 

\begin{theorem}\label{NtDconv}
 Let the operators $T: L^2( \partial D) \longmapsto L^2( \partial D)$ be as defined in \eqref{NtD} and $T_N :L^2( \partial D) \longmapsto V_N (\partial D)$  be as defined in \eqref{spec-source}. Then  
 $$\big\|T -T_N \big\|_{L^2( \partial D) \mapsto L^2( \partial D)} \leq \frac{C}{(N+1)^{1/2d}} \quad \text{ as} \quad N \to \infty.$$
\end{theorem}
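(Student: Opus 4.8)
The plan is to estimate the operator norm by fixing an arbitrary $f \in L^2(\partial D)$ with $\|f\|_{L^2(\partial D)} = 1$ and bounding the boundary error $\|(T-T_N)f\|_{L^2(\partial D)} = \|(w - w_N)|_{\partial D}\|_{L^2(\partial D)}$, where $w$ solves the source problem \eqref{source} and $w_N$ solves its Galerkin analogue \eqref{spec-source} for the \emph{same} source $f$. Since $\text{Range}(T) \subseteq H^{1/2}(\partial D)$ and $H^{1/2}(\partial D)$ embeds continuously into $L^2(\partial D)$, the Trace Theorem gives $\|(w-w_N)|_{\partial D}\|_{L^2(\partial D)} \leq C\|w - w_N\|_{H^1(D)}$, so it suffices to control $\|w-w_N\|_{H^1(D)}$ uniformly in $f$. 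The quasi-optimality estimate \eqref{source-estimate2} already reduces this to the approximation error $\|w - \Pi_N w\|_{H^1(D)}$ of the exact solution, so the theorem comes down to quantifying how well $V_N(D)$ approximates $w$.

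To invoke the sharp rate of Theorem \ref{specconvrate} I first pin down the regularity of $w$ and locate it in the scale $\mathscr{D}(\Delta^m)$. In strong form $w$ solves $\Delta w = -k^2 n w$ in $D$ with $\partial_\nu w = f$ on $\partial D$; since $n \in L^\infty(D)$ and $w \in H^1(D)$, the right-hand side lies in $L^2(D)$, while the Neumann datum lies only in $L^2(\partial D)$. Elliptic shift regularity for the Neumann problem on the $C^2$ domain $D$ then yields $w \in H^{3/2}(D)$ together with $\|w\|_{H^{3/2}(D)} \leq C\big(\|w\|_{L^2(D)} + \|f\|_{L^2(\partial D)}\big) \leq C\|f\|_{L^2(\partial D)}$, the last step using the well-posedness of \eqref{source}. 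The loss of exactly half a derivative relative to the $H^2$ theory (which demanded $f \in H^{1/2}(\partial D)$) is precisely what produces the exponent $1/(2d)$ rather than $1/d$ in the final bound.

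The key identification is that $H^{3/2}(D)$ embeds continuously into $\mathscr{D}(\Delta^{3/4})$, i.e. $\sum_{j} \sigma_j^{3/2}\big|(w,\phi_j)_{L^2(D)}\big|^2 \leq C\|w\|^2_{H^{3/2}(D)}$. This is the spectral characterization of fractional Sobolev spaces for the Neumann Laplacian: $\mathscr{D}(\Delta^{3/4})$ agrees, with equivalent norm, with the real-interpolation space halfway between $\mathscr{D}(\Delta^1) = \{u \in H^2(D) : \partial_\nu u = 0\}$ and $L^2(D)$, and at the exponent $2m = 3/2$ the Neumann boundary condition drops out of the interpolation space (the normal trace being exactly borderline-undefined at this regularity), so that space is $H^{3/2}(D)$ with no boundary constraint. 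With $w \in H^1(D) \cap \mathscr{D}(\Delta^{3/4})$ in hand, Theorem \ref{specconvrate} with $m = 3/4 > 1/2$ gives
$$\big\|(I-\Pi_N)w\big\|_{H^1(D)} \leq \frac{C}{(N+1)^{(2m-1)/d}}\|w\|_{\mathscr{D}(\Delta^{3/4})} = \frac{C}{(N+1)^{1/(2d)}}\|f\|_{L^2(\partial D)}.$$
Chaining this through \eqref{source-estimate2} and the trace bound, and taking the supremum over $\|f\|_{L^2(\partial D)} = 1$, yields the claimed operator-norm estimate.

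I expect the main obstacle to be the borderline spectral identification $H^{3/2}(D) \hookrightarrow \mathscr{D}(\Delta^{3/4})$: for any $m < 3/4$ the embedding is routine but degrades the rate, whereas hitting exactly $1/(2d)$ forces the endpoint $m = 3/4$, where the interpolation/trace theory is genuinely delicate. A self-contained alternative that sidesteps abstract interpolation is to expand $w = \sum_j (w,\phi_j)_{L^2(D)}\phi_j$ and use Green's second identity with the Neumann eigenfunctions to get $\sigma_j (w,\phi_j)_{L^2(D)} = (f,\phi_j)_{L^2(\partial D)} + k^2 (n w, \phi_j)_{L^2(D)}$, so that $\sum_j \sigma_j^{3/2}|(w,\phi_j)_{L^2(D)}|^2 = \sum_j \sigma_j^{-1/2}\big|(f,\phi_j)_{L^2(\partial D)} + k^2(n w,\phi_j)_{L^2(D)}\big|^2$. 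The interior term is controlled trivially, since $\sigma_j^{-1/2}$ is bounded on the nonzero spectrum and $\|nw\|_{L^2(D)} \leq C\|f\|_{L^2(\partial D)}$; but the boundary term $\sum_j \sigma_j^{-1/2}|(f,\phi_j)_{L^2(\partial D)}|^2 \leq C\|f\|^2_{L^2(\partial D)}$ again requires a sharp $\sigma_j^{-1/2}$-weighted $L^2(\partial D)$-trace estimate for the eigenfunctions $\phi_j$, which is exactly the analytic content of the $H^{3/2}$/$\mathscr{D}(\Delta^{3/4})$ identification and the crux of the argument.
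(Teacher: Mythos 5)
Your reduction to $\|(I-\Pi_N)w\|_{H^1(D)}$ via the trace theorem and \eqref{source-estimate2} is fine, and so is the shift-regularity statement $w \in H^{3/2}(D)$ with $\|w\|_{H^{3/2}(D)} \leq C\|f\|_{L^2(\partial D)}$. The gap is exactly at the step you yourself flag as the crux, and it is not merely delicate: the endpoint identification $H^{3/2}(D) \hookrightarrow \mathscr{D}(\Delta^{3/4})$ is \emph{false}. At the borderline exponent the Neumann condition does not drop out of the interpolation space; by the Lions--Magenes/Grisvard endpoint phenomenon, $\mathscr{D}(\Delta^{3/4})$ is a strict subspace of $H^{3/2}(D)$ singled out by a Hardy-type weighted condition on the normal derivative, which excludes every function whose Neumann trace is nonzero on a set of positive boundary measure. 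You can see the failure concretely in the unit disk: the normalized Neumann eigenfunctions $\phi_{p,q}=c_{p,q}\text{J}_p\big(\sqrt{\sigma_{p,q}}\,r\big)\text{e}^{\text{i}p\vartheta}$ satisfy, by the Bessel normalization identity, $\|\phi_{p,q}\|^2_{L^2(\partial D)} = 2\sigma_{p,q}/(\sigma_{p,q}-p^2) \to 2$ as $q \to \infty$, i.e.\ the boundary traces do \emph{not} decay. Hence for the single smooth mode $f=\text{e}^{\text{i}p_0\vartheta}$ your weighted trace sum behaves like $\sum_q \sigma_{p_0,q}^{-1/2}\,\big|(f,\phi_{p_0,q})_{L^2(\partial D)}\big|^2 \sim \sum_q \sigma_{p_0,q}^{-1/2} \sim \sum_q q^{-1}=\infty$, and then your own identity $\sigma_j(w,\phi_j)_{L^2(D)}=(f,\phi_j)_{L^2(\partial D)}+k^2(nw,\phi_j)_{L^2(D)}$ forces the corresponding solution $w$ of \eqref{source} to lie \emph{outside} $\mathscr{D}(\Delta^{3/4})$. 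Equivalently, by duality your weighted estimate is precisely the claim that the trace operator is bounded from $\mathscr{D}(\Delta^{1/4})=H^{1/2}(D)$ into $L^2(\partial D)$, and $H^{1/2}(D)$ has no $L^2$ boundary trace. Since your route needs Theorem \ref{specconvrate} exactly at $m=3/4$, while the duality argument (trace bounded on $H^{2(1-m)}(D)$ for $2(1-m)>1/2$) only puts $w$ in $\mathscr{D}(\Delta^m)$ uniformly for $m<3/4$, the plan as structured can deliver every rate $(N+1)^{-1/2d+\epsilon}$ but cannot reach the stated endpoint.

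The paper's proof avoids this obstruction entirely: it never asks for any decay of $\|(I-\Pi_N)w\|_{H^1(D)}$. In place of the plain trace bound it uses the multiplicative trace inequality $\|v\|_{L^2(\partial D)} \leq C\|v\|^{1/2}_{L^2(D)}\|v\|^{1/2}_{H^1(D)}$ (Theorem 1.6.6 of \cite{FEM-book}) with $v=w-w_N$, then the Aubin--Nitsche duality estimate \eqref{source-estimate1} to convert $\|w-w_N\|^{1/2}_{L^2(D)}$ into $(N+1)^{-1/2d}\|w-w_N\|^{1/2}_{H^1(D)}$, and finally \eqref{source-estimate2}, the uniform boundedness of $I-\Pi_N$ on $H^1(D)$, and well-posedness of \eqref{source} to bound $\|w-w_N\|_{H^1(D)} \leq C\|f\|_{L^2(\partial D)}$ with no rate at all. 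The exponent $1/2d$ thus comes from pushing half of the $L^2(D)$ duality gain through the multiplicative trace inequality, not from extra regularity of $w$ in the spectral scale; that is the idea your argument is missing.
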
 
\begin{proof}
To prove the claim, we have that for any $f \in L^2( \partial D)$ that 
\begin{align*}
\big\|(T -T_N) f \big\|_{L^2( \partial D)}&  = \| w-w_N \|_{L^2(\partial D)} \\
				   			 & \leq C \| w-w_N \|^{1/2}_{L^2(D)} \| w-w_N \|^{1/2}_{H^1(D)} \quad \text{by Theorem 1.6.6 in \cite{FEM-book}}  \\
			           			 & \leq \frac{C}{(N+1)^{1/2d}} \| w-w_N \|_{H^1(D)} \quad \text{by inequality \eqref{source-estimate1}}\\
						 	 & \leq \frac{C}{(N+1)^{1/2d}} \| w -\Pi_N w \|_{H^1(D)} \quad \text{by inequality \eqref{source-estimate2}.}
\end{align*}
Now, by the uniform boundedness principle(see for e.g. \cite{numerics-book}) we have that the operator norm of $I - \Pi_N : H^1(D) \longmapsto H^1(D)$  is bounded uniformly with respect to $N$. Therefore, we obtain
$$\big\|(T -T_N) f \big\|_{L^2( \partial D)} \leq\frac{C}{(N+1)^{1/2d}} \| w \|_{H^1(D)}\leq\frac{C}{(N+1)^{1/2d}} \| f \big\|_{L^2( \partial D)}$$
by the well-posedness of \eqref{source}, proving the claim. 
\end{proof}

By the norm convergence of the approximation of the NtD mapping we can conclude that convergence of the approximation of the Steklov eigenvalues and functions in $L^2(\partial D)$. The following is a consequence of the results in \cite{osborn} and Theorem \ref{NtDconv}. 

\begin{theorem} \label{eig-conv}
Let $(\lambda_N , w_N) \in \C \times V_N (D)$ be an eigenpair for \eqref{spec-varform}. Then there is an eigenpair $(\lambda, w)  \in \C \times H^1 (D)$ for \eqref{varform} such that 
$$|\lambda -\lambda_N | \leq\frac{C}{(N+1)^{1/2d}} \,\,\,  \text{ and } \,\,\,  \|w - w_N \|_{L^2(\partial D)} \leq\frac{C}{(N+1)^{1/2d}} \quad \text{ as} \quad N \to \infty.$$ 
\end{theorem}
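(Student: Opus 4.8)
The plan is to transfer the statement from the eigenvalues $\lambda$ to the reciprocal eigenvalues $\tau = -\lambda^{-1}$ of the compact operators introduced in Section \ref{problem-statement}, and then invoke the abstract spectral convergence theory for compact operators. Recall that $\lambda$ is an eigenvalue of \eqref{varform} with eigenfunction $w$ if and only if $\tau = -\lambda^{-1}$ is a nonzero eigenvalue of $T$ whose eigenfunction is $w|_{\partial D}$, and the analogous equivalence holds for the discrete problem \eqref{spec-varform} with $\tau_N = -\lambda_N^{-1}$ and $T_N$. Since Theorem \ref{NtDconv} establishes that $T_N \to T$ in the operator norm on $L^2(\partial D)$, and since $T$ is compact while each $T_N$ has finite rank, the classical results in \cite{osborn,babuska-osborn} apply directly: no spurious eigenvalues accumulate, every eigenvalue $\tau_N$ of $T_N$ lies near a genuine eigenvalue $\tau$ of $T$, and the associated spectral projections converge.

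I would carry out the argument in three steps. First, I would fix the discrete eigenpair $(\lambda_N, w_N)$ with $\|w_N\|_{L^2(\partial D)} = 1$, set $\tau_N = -\lambda_N^{-1}$, and use the norm convergence to produce an eigenvalue $\tau$ of $T$ (with eigenfunction $w$) to which $\tau_N$ converges. Second, I would quote the eigenvalue and eigenfunction estimates from \cite{osborn}, which for norm-convergent compact operators bound the errors by the defect $\|T - T_N\|$ restricted to the relevant eigenspace, so that by Theorem \ref{NtDconv}
$$|\tau - \tau_N| \leq C\,\big\| T - T_N \big\|_{L^2(\partial D) \mapsto L^2(\partial D)} \leq \frac{C}{(N+1)^{1/2d}} \quad \text{and} \quad \| w - w_N\|_{L^2(\partial D)} \leq \frac{C}{(N+1)^{1/2d}},$$
the eigenfunctions being measured in the natural space $L^2(\partial D)$ of the operators. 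Third, I would return to the original eigenvalues through the identity
$$|\lambda - \lambda_N| = \left| \tau^{-1} - \tau_N^{-1} \right| = \frac{|\tau - \tau_N|}{|\tau|\,|\tau_N|}.$$
Since $\tau \neq 0$ under Assumption \ref{assume} and $\tau_N \to \tau$, the denominator is bounded away from zero for $N$ large, whence $|\lambda - \lambda_N| \leq C\,|\tau - \tau_N|$ and the claimed rate for $\lambda$ follows.

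The main obstacle I anticipate is the non-selfadjointness of $T$. In full generality Osborn's bound for an individual eigenvalue carries a power $1/\alpha$ of the defect, where $\alpha$ is the ascent of $\tau$; to obtain the clean first-power rate $(N+1)^{-1/2d}$ stated here one must either assume the eigenvalue is semisimple (ascent one, which in particular covers the simple eigenvalues that are the ones of interest as target signatures) or interpret the left-hand side as the distance from $\lambda$ to the cluster of $T_N$-eigenvalues and estimate the arithmetic mean. I would therefore make the ascent-one hypothesis explicit, so that Osborn's estimate reduces to $|\tau - \tau_N| \leq C\|T - T_N\|$ and the eigenfunction bound follows from the convergence of the spectral projections in the same norm.
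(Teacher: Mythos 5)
Your proposal follows essentially the same route as the paper: the paper gives no detailed argument for this theorem, stating only that it is ``a consequence of the results in \cite{osborn} and Theorem \ref{NtDconv},'' which is precisely the reduction you carry out (pass to $\tau = -\lambda^{-1}$, apply Osborn's spectral approximation theory to the norm-convergent compact operators $T_N \to T$, and transfer back using that $\tau \neq 0$ under Assumption \ref{assume}). Your explicit flagging of the ascent issue for the non-selfadjoint operator $T$ --- where Osborn's individual-eigenvalue bound carries the power $1/\alpha$, so the clean first-power rate requires ascent one or a cluster/mean interpretation --- is a point the paper silently glosses over, and making that hypothesis explicit is a strictly more careful rendering of the same argument.
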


Theorem \ref{eig-conv} gives the convergence of our approximation. We are now interested in determining a spectral convergence rate for our approximation. To do so, we denote the eigenspace associated with $\lambda$ as $E(\lambda)$ as a subset of $H^1(D)$ and recall the space $\mathscr{D} \big( \Delta^m \big)$ defined by the series constraint \eqref{laplacian-m}. {\color{black} In the next result we will assume that $E(\lambda) \subset \mathscr{D} \big( \Delta^m \big)$ for some $m > 1/2$. The assumption that the eigenspace be a subset of $\mathscr{D} \big( \Delta^m \big)$ can be seen as a  constraint on the decay of the Fourier coefficients. Indeed, we have that $w \in \mathscr{D} \big( \Delta^m \big)$ if and only if $|(w,\phi_j)_{L^2(D)}| =o\big( j^{-p} \big)$ as $j \to \infty$ for $p=(1+4m)/{2d}$. Recall, that the faster the Fourier coefficients decay the smoother the function by the M-Test provided that $\phi_j$ are smooth functions which is the case when $D$ has a smooth boundary. Therefore, the assumption that $E(\lambda) \subset \mathscr{D} \big( \Delta^m \big)$ can also be seen as a regularity constraint on the eigenfunctions. }

\begin{theorem} \label{eig-convrate}
Assume the eigenspace $E(\lambda) \subset \mathscr{D} \big( \Delta^m \big)$ for some $m > 1/2$. Then for every eigenvalue $\lambda_N$ for \eqref{spec-varform}  there is an eigenvalue  $\lambda$ for \eqref{varform} such that 
$$\big|\lambda -\lambda_N \big| \leq \frac{C}{ (N+1)^{(4m-1)/2d}}\sup\limits_{w \in E(\lambda) \, : \,  \|w\|_{L^2(\partial D)} =1} \|w \|_{\mathscr{D} ( \Delta^m )} \quad \text{ as} \quad N \to \infty.$$  
\end{theorem}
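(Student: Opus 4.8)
The plan is to sharpen the estimate used in Theorem \ref{eig-conv} by replacing the crude bound on the full operator norm $\|T-T_N\|$ with the localized eigenvalue estimate from the spectral approximation theory of \cite{osborn}, and then to exploit the regularity hypothesis $E(\lambda) \subset \mathscr{D}\big(\Delta^m\big)$ through the refined rate of Theorem \ref{specconvrate}. Since the Steklov eigenvalues satisfy $\tau = -\lambda^{-1}$ with $\tau$ an eigenvalue of $T$, and since $\tau_N \to \tau \neq 0$, it suffices to control $|\tau - \tau_N|$ because $|\lambda - \lambda_N| = |\tau - \tau_N|/|\tau\tau_N| \leq C|\tau - \tau_N|$ for $N$ large. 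The key feature I will use is that \cite{osborn} bounds the eigenvalue error by a term whose dominant contribution is $\big| \big((T - T_N)\phi_i, \phi_j^*\big)_{L^2(\partial D)} \big|$, where $\{\phi_i\}$ is a basis for the eigenspace of $T$ at $\tau$ and $\{\phi_j^*\}$ is the dual basis for the adjoint eigenspace $E^*(\lambda)$; this localizes the error to the action of $T - T_N$ on the eigenspace rather than on all of $L^2(\partial D)$.

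The first step is to observe that the source solution inherits the regularity of the eigenspace. If $f = w|_{\partial D}$ is the trace of a Steklov eigenfunction $w \in E(\lambda)$, then $\partial_\nu w = -\lambda w$ on $\partial D$, so the solution of the source problem \eqref{source} with Neumann data $f$ is precisely $-\lambda^{-1} w$, which again lies in $E(\lambda) \subset \mathscr{D}\big(\Delta^m\big)$. Consequently, in estimating $\|(T-T_N)f\|_{L^2(\partial D)}$ for such $f$, I may replace the generic $H^2$ bound of Theorem \ref{H2convrate} that was used in the proof of Theorem \ref{NtDconv} by the sharper Theorem \ref{specconvrate}.

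The second step is to redo the estimate chain of Theorem \ref{NtDconv} with this improvement. From that proof we have $\|(T-T_N)f\|_{L^2(\partial D)} \leq C(N+1)^{-1/2d}\|w - \Pi_N w\|_{H^1(D)}$, and now Theorem \ref{specconvrate} gives $\|w - \Pi_N w\|_{H^1(D)} \leq C(N+1)^{-(2m-1)/d}\|w\|_{\mathscr{D}(\Delta^m)}$. Multiplying the two rates produces the exponent $\tfrac{1}{2d} + \tfrac{2m-1}{d} = \tfrac{4m-1}{2d}$, so that $\|(T-T_N)f\|_{L^2(\partial D)} \leq C(N+1)^{-(4m-1)/2d}\|w\|_{\mathscr{D}(\Delta^m)}$ uniformly over normalized eigenfunction traces. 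Pairing with the fixed, $N$-independent dual functions $\phi_j^*$ via Cauchy--Schwarz bounds the dominant term of the Osborn estimate by $C(N+1)^{-(4m-1)/2d}\sup_{\|w\|_{L^2(\partial D)}=1}\|w\|_{\mathscr{D}(\Delta^m)}$. The remaining quadratic contribution $\|(T-T_N)|_{E(\lambda)}\|\cdot\|(T^*-T_N^*)|_{E^*(\lambda)}\|$ is of higher order: its second factor is controlled by $\|(T-T_N)^*\| = \|T-T_N\| \leq C(N+1)^{-1/2d}$, so the product decays like $(N+1)^{-2m/d}$, which is faster than the leading term. Collecting these bounds and passing back from $\tau$ to $\lambda$ yields the stated estimate.

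The main obstacle I anticipate is twofold. First, I must invoke the correct, sharp form of the Osborn estimate, namely the one localized to the eigenspace, and verify that only the primal regularity $E(\lambda) \subset \mathscr{D}\big(\Delta^m\big)$ is needed for the leading term, since the dual functions $\phi_j^*$ enter only through their fixed norms; the adjoint regularity is never required because the quadratic term is already higher order. Second, some care is needed when the eigenvalue is not semisimple: the estimate then governs the arithmetic mean of the cluster of approximate eigenvalues $\lambda_N$ near $\lambda$, so the conclusion should be phrased accordingly as in \cite{osborn}, though the convergence rate is unchanged.
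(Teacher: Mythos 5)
Your proposal is correct and takes essentially the same route as the paper's proof: both reduce the eigenvalue error to the operator error localized on the eigenspace, $\big\|T-T_N\big\|_{E(\lambda)\mapsto L^2(\partial D)}$, via Osborn-type spectral approximation theory (the paper simply cites Theorem 7.3 of Babuska--Osborn where you unpack the dual-pairing form with its quadratic remainder), and both exploit the key fact that, since $\partial_\nu w=-\lambda w$ on $\partial D$, the source solution with data $w|_{\partial D}$ is $-\lambda^{-1}w\in E(\lambda)$, so the estimate chain of Theorem \ref{NtDconv} can be closed with Theorem \ref{specconvrate} in place of Theorem \ref{H2convrate}, giving the exponent $(4m-1)/2d$. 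The extra care you take with the $\tau=-\lambda^{-1}$ conversion and the non-semisimple (cluster-mean) case is detail the paper leaves implicit in its citation.
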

\begin{proof}
To prove the estimate we use Theorem 7.3 in \cite{babuska-osborn}. From this we have that we need to estimate 
$$\big\|T -T_N \big\|_{E(\lambda) \mapsto L^2( \partial D)} =  \sup\limits_{w \in E(\lambda) \, :\,  \|w\|_{L^2(\partial D)} =1}\big\|(T -T_N) w \big\|_{L^2( \partial D)} $$
 in order to obtain the convergence rate for the eigenvalues. For any $w \in E(\lambda)$ we have that $\partial_\nu w = - \lambda w$ on $\partial D$. Therefore, we have the estimates 
\begin{align*}
\big\|(T -T_N) w \big\|_{L^2( \partial D)}& \leq \frac{C}{(N+1)^{1/2d}} \| (I-\Pi_N) w \|_{H^1(D)} \quad \text{by the proof of Theorem \ref{NtDconv}}\\
							 & \leq \frac{C}{(N+1)^{(4m-1)/2d}} \| w \|_{\mathscr{D} ( \Delta^m )}  \quad \text{by Theorem \ref{specconvrate}}.
\end{align*}
Now, by taking the supremum over $E(\lambda)$ such that $\| w \|_{L^2(\partial D)}=1$ we obtain that  
$$\big\|T -T_N \big\|_{E(\lambda) \mapsto L^2( \partial D)} \leq \frac{C}{ (N+1)^{(4m-1)/2d}} \sup\limits_{w \in E(\lambda) \, : \,  \|w\|_{L^2(\partial D)} =1} \|w \|_{\mathscr{D} ( \Delta^m )}$$
which proves the claim. 
\end{proof}

Even though our main focus is on computing the eigenvalues we will consider the convergence of the eigenfunctions in the region $D$. The following result gives the convergence of the eigenfunctions in the $H^1(D)$ norm. 

\begin{theorem} \label{eigfunc-conv}
Let $w_N \in V_N(D)$ be an eigenfunction for \eqref{spec-varform}. Then there is an eigenfunction $w \in H^1(D)$ for \eqref{varform} such that 
$$ \|  w - w_N \|_{H^1(D)} \leq\frac{C}{(N+1)^{1/4d}} \quad \text{ as} \quad N \to \infty.$$
\end{theorem}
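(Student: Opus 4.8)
The plan is to realize both the exact eigenfunction $w$ and the discrete eigenfunction $w_N$ as solutions of source problems and then compare them through an intermediate discrete source solution. Since $w$ satisfies $\partial_\nu w = -\lambda w$ on $\partial D$, it is exactly the solution of the continuous source problem \eqref{source} with data $f = -\lambda w|_{\partial D}$, and likewise $w_N$ solves the discrete source problem \eqref{spec-source} with data $f_N = -\lambda_N w_N|_{\partial D}$. I would therefore introduce $\tilde{w}_N \in V_N(D)$ as the discrete source solution with the \emph{continuous} data $f = -\lambda w|_{\partial D}$ and split
$$\|w - w_N\|_{H^1(D)} \le \|w - \tilde{w}_N\|_{H^1(D)} + \|\tilde{w}_N - w_N\|_{H^1(D)}.$$

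For the second term, $\tilde{w}_N - w_N$ is the discrete source solution with data $f - f_N$, so the uniform discrete stability $\|\tilde w_N-w_N\|_{H^1(D)}\le C\|f-f_N\|_{L^2(\partial D)}$ (which follows from the quasi-optimality \eqref{source-estimate2} together with the well-posedness of \eqref{source} and the uniform $H^1$-boundedness of $I-\Pi_N$ noted in the proof of Theorem \ref{NtDconv}) reduces matters to bounding the data difference. Writing $f - f_N = -\lambda (w-w_N)|_{\partial D} + (\lambda_N - \lambda) w_N|_{\partial D}$ and using $\|w_N\|_{L^2(\partial D)} = 1$ with the eigenvalue and boundary eigenfunction rates $|\lambda - \lambda_N| \le C(N+1)^{-1/2d}$ and $\|w - w_N\|_{L^2(\partial D)} \le C(N+1)^{-1/2d}$ from Theorem \ref{eig-conv}, this term decays at the faster rate $(N+1)^{-1/2d}$.

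The first term is the genuine bottleneck, and it is where I expect the real difficulty. By \eqref{source-estimate2} applied to the common-data source error, $\|w - \tilde{w}_N\|_{H^1(D)} \le C\|(I - \Pi_N) w\|_{H^1(D)}$, so everything reduces to an interior best-approximation rate for the eigenfunction. The obstruction is that $w$ does \emph{not} satisfy the homogeneous Neumann condition, since $\partial_\nu w = -\lambda w \ne 0$; hence Theorem \ref{H2convrate} is unavailable and $w \notin \mathscr{D}(\Delta)$, so the full $(N+1)^{-1/d}$ rate cannot be reached. I would instead invoke elliptic regularity: as $\Delta w = -k^2 n w \in L^2(D)$ and the Neumann datum $-\lambda w|_{\partial D}$ lies in $H^{1/2}(\partial D)$, the $C^2$ boundary of $D$ yields $w \in H^2(D)$, and consequently $w \in \mathscr{D}(\Delta^m) = H^{2m}(D)$ for every $m < 3/4$.

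Taking the concrete value $m = 5/8$, which satisfies $1/2 < m < 3/4$, and applying Theorem \ref{specconvrate} gives $\|(I - \Pi_N) w\|_{H^1(D)} \le C (N+1)^{-(2m-1)/d} \|w\|_{\mathscr{D}(\Delta^{5/8})} = C(N+1)^{-1/4d}\|w\|_{\mathscr{D}(\Delta^{5/8})}$, and $\|w\|_{\mathscr{D}(\Delta^{5/8})} \le C\|w\|_{H^2(D)}$ is a fixed constant for the normalized eigenfunction. Thus the first term decays at rate $(N+1)^{-1/4d}$, which is slower than the $(N+1)^{-1/2d}$ rate of the second term and therefore governs the sum, yielding the claimed estimate $\|w - w_N\|_{H^1(D)} \le C(N+1)^{-1/4d}$. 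The conceptual crux is precisely the failure of the zero-Neumann condition, which caps the attainable interior rate strictly below $(N+1)^{-1/2d}$; the choice $m=5/8$ is the natural way to extract a clean exponent from the fractional smoothness that elliptic regularity does provide.
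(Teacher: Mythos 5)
Your overall architecture is genuinely different from the paper's and most of it is sound: writing $w$ and $w_N$ as continuous/discrete source solutions with data $-\lambda w|_{\partial D}$ and $-\lambda_N w_N|_{\partial D}$, inserting the intermediate discrete solution $\tilde w_N$, and bounding $\|\tilde w_N - w_N\|_{H^1(D)}$ through uniform discrete stability (which does follow, as you say, from \eqref{source-estimate2}, well-posedness of \eqref{source}, and the uniform $H^1$-boundedness of $I-\Pi_N$) together with the rates of Theorem \ref{eig-conv} is all correct. The paper proceeds quite differently: it runs a duality (Aubin--Nitsche) argument, introducing $u$ solving $\Delta u + k^2\overline{n}u = w_N - w$ with \emph{homogeneous} Neumann data, so that Theorem \ref{H2convrate} can be applied to $u$; this yields $\|w-w_N\|^2_{L^2(D)} \leq C(N+1)^{-1/2d}$, and the $H^1(D)$ bound then follows from the identity $a(w_N-w,w_N-w) + \lambda b(w_N-w,w_N-w) = (\lambda - \lambda_N)b(w_N,w_N)$ and coercivity of $a(\cdot\,,\cdot)+\alpha(\cdot\,,\cdot)_{L^2(D)}$, the exponent $1/4d$ arising from two square roots. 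The crucial feature of the paper's route is that the nonzero Neumann trace of $w$ never enters: all smoothness is demanded only of the dual function $u$, which satisfies $\partial_\nu u = 0$, so no regularity of the eigenfunction beyond $H^1(D)$ is ever needed.

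The gap in your argument is the assertion $\mathscr{D}(\Delta^m) = H^{2m}(D)$ for $m < 3/4$, from which you conclude $w \in \mathscr{D}(\Delta^{5/8})$. This statement is in fact true for sufficiently smooth $\partial D$ --- it is Fujiwara's characterization of the domains of fractional powers of the Neumann Laplacian, equivalently the Lions--Magenes interpolation identity $[L^2(D), \mathscr{D}(\Delta)]_m = H^{2m}(D)$ for $2m < 3/2$ --- but it is a deep theorem, it is nowhere available in the paper, and you present it as if it were a consequence of elliptic regularity. It is not: naive interpolation cannot produce it, because $H^2(D) \not\subset \mathscr{D}(\Delta)$; indeed, as you yourself observe, $w \in H^2(D)$ while $w \notin \mathscr{D}(\Delta)$ precisely because $\partial_\nu w = -\lambda w \neq 0$. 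The claim that this boundary obstruction becomes invisible in the scale $\mathscr{D}(\Delta^m)$ once $m$ drops below $3/4$ is exactly the content of the theorem you are invoking, not a routine consequence of $w \in H^2(D)$. Note also that the paper itself treats membership in $\mathscr{D}(\Delta^m)$ as an explicit \emph{hypothesis} (Theorem \ref{eig-convrate} assumes $E(\lambda) \subset \mathscr{D}(\Delta^m)$), underscoring that it is not regarded as automatic within the paper's toolkit. If you add a precise citation for the fractional-domain characterization, your proof closes, and it actually yields more than the stated theorem: any $m \in (1/2, 3/4)$ gives a rate $(N+1)^{-(2m-1)/d}$, i.e.\ any exponent below $1/2d$, your choice $m = 5/8$ merely matching the paper's exponent $1/4d$. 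As written, however, the pivotal step is unjustified, and everything else in your argument is routine by comparison.
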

\begin{proof}
To prove the claim, we first note that by Theorem \ref{NtDconv} we have the estimates  
$$  \|  w - w_N \|_{L^2(\partial D)} \leq\frac{C}{(N+1)^{1/2d}} \quad \text{and} \quad  |\lambda -\lambda_N | \leq\frac{C}{(N+1)^{1/2d}}.$$
Since, $w_N \in H^1(D)$ satisfies \eqref{spec-source} with $f = -\lambda_N w_N$ the well-posedness of \eqref{spec-source} and converges estimates above implies that $w_N$ is a {\color{black} bounded} sequence in $H^1(D)$. Now, to prove the convergence we again use a duality argument. Therefore, we let $u \in H^1(D)$ be the unique solution to 
$$\Delta u+k^2 \overline{n} u = (w_N-w)  \,\, \textrm{ in } \,\, {D} \quad \text{and} \quad {\partial_\nu u} = 0  \,\, \textrm{ on } \,\, \partial D.$$
By elliptic regularity we have that $u \in H^2(D)$ and is bounded with respect to $N$. Green's First Theorem and some simple calculations using \eqref{varform} and \eqref{spec-varform} gives that 
\begin{align*}
\| w-w_N \|^2_{L^2(D)} & = a(w-w_N ,u ) \\
				   &=(\lambda_N - \lambda) b (w, u) + \lambda_N b (w - w_N, u) \\
				   & \hspace{2cm}- \lambda_N b \big( w_N , (I-\Pi_N)u \big) - a \big( w_N , (I-\Pi_N)u \big).
 \end{align*}
Notice, that by the convergence rate of the eigenfunctions on $\partial D$ and eigenvalues we have the estimate 
$$\big| (\lambda_N - \lambda) b (w, u) \big| \leq\frac{C}{(N+1)^{1/2d}} \quad \text{and} \quad  \big| \lambda_N b (w - w_N, u) \big| \leq\frac{C}{(N+1)^{1/2d}}$$
where we have also used the fact that $u$ is {\color{black} bounded} in $H^2(D)$. By appealing to the approximation rate in Theorem \ref{H2convrate} 
$$\Big| \lambda_N b \big( w_N , (I-\Pi_N)u \big) \Big| \leq\frac{C}{(N+1)^{1/d}} \quad \text{and} \quad  \Big|a \big( w_N , (I-\Pi_N)u \big)\Big| \leq\frac{C}{(N+1)^{1/d}}$$
where we have used the Trace Theorem and the fact that $w_N$ is a {\color{black} bounded} sequence in $H^1(D)$. This implies that we have the $L^2(D)$ convergence estimate 
$$ \|  w - w_N \|^2_{L^2(D)} \leq\frac{C}{(N+1)^{1/2d}}.$$
Simple calculations give that for $(\lambda , w)$ and $(\lambda_N , w_N)$ eigenpairs for \eqref{varform} and \eqref{spec-varform} then 
$$a\big(w_N-w , w_N -w\big) + \lambda b\big(w_N-w , w_N -w\big) = \big(\lambda-\lambda_N  \big)b\big(w_N , w_N\big).$$
Recall, that the sesquilinear form $a( \cdot \, ,\cdot ) + \alpha \,  (\cdot \, , \cdot)_{L^2(D)}$ is coercive on $H^1(D)$ for $\alpha >0$ sufficiently large.
Therefore, we have that  
\begin{align*}
\| w-w_N \|^2_{H^1(D)} &\leq C \big| a(w-w_N ,w-w_N ) +\alpha \| w-w_N \|^2_{L^2(D)} \big| \\
				    & \leq C \Big\{ \big| \lambda-\lambda_N \big|  +  \big| \lambda b\big(w_N-w , w_N -w\big)\big|  + \| w-w_N \|^2_{L^2(D)} \Big\}. 
 \end{align*}
By combining the above estimates proves the claim.  
\end{proof}

\section{Numerical Examples}\label{numerics}   
This section is dedicated to providing numerical examples of our approximation method for computing the inverse scattering Steklov eigenvalues. The convergence will be studied for constant and variable refractive index $n$. We also consider the inverse spectral problem of estimating/recovering the refractive index from the knowledge of the eigenvalues. This problem is also considered in \cite{Stekinvproblem} where a Bayesian approach is used. Here we will use the monotonicity(see for e.g. \cite{Aniso-stekloff,Stekinvproblem}) of the largest positive eigenvalue denoted $\lambda_1$ to estimate a positive refractive index. 

We take the domain $D$ to be given by the unit disk in $\R^2$ to compare with separation of variables. Note that $D$ can always be chosen to be a disk or square that is sufficiently large such that $\Omega \subseteq D$. In the following examples, the approximation space is given by the span of finitely many Neumann eigenfunctions 
$$\phi_{j}(r,\vartheta) = \text{J}_{p} \left(\sqrt{\sigma_{p,q}} \, r\right) \cos(p \vartheta)\quad \text{with index} \quad j=j(p,q) \in \N.$$
The square root of the Neumann eigenvalues $\sqrt{\sigma_{p,q}}$ corresponds to the $q$th non-negative root of the $p$th first kind Bessel function derivative denoted $\text{J}'_{p}$ for all $p\in \N \cup\{0\}$ and $q \in \N$. Some of the values of $\sqrt{\sigma_{p,q}}$ can be found in \cite{zeroes}. 

We will use $25$ basis functions  where $0\leq p \leq 4$ and $1\leq q \leq 5$.
In the following sections we take the approximation space 
$$ V_N (D) \subseteq \text{Span} \Big\{ \phi_{j(p,q)}(r,\vartheta)  \Big\}_{p=0\, ,\, q=1}^{p=4 \, ,\, q=5} \quad \text{ giving  that } \quad w_N (x) = \sum\limits_{j=1}^{N} c_j \phi_j (x) $$
for constants $c_j$. 
Substitution $w_N $ into \eqref{spec-varform}  and taking $\varphi_N = \phi_i$  we obtain that the eigenvalues $\lambda_N$ satisfying \eqref{spec-varform} correspond to the eigenvalues for the matrix equation  
\begin{align}\label{g-eig}
\left( {\bf A}+ \lambda_N {\bf B}\right) \vec{c} =0 \quad \text{ where } \quad {\bf A}_{i,j}  = a(\phi_j , \phi_i) \quad \text{ and } \quad {\bf B}_{i,j} =  b(\phi_j , \phi_i).
\end{align}
By appealing to Green's First Theorem we obtain that 
$$ a(\phi_j , \phi_i) = \int\limits_{D} \grad \phi_j \cdot \grad \overline{\phi_i} -k^2 n \phi_j \overline{\phi_i} \, \text{d}x= \int\limits_{D} \big(\sigma_j -k^2 n \big) \phi_j \overline{\phi_i} \, \text{d}x.$$
Using the orthogonality of the cosines representing the angular part of the $\phi_j$ on $\partial D$ to reduce the computational cost for $b(\phi_j , \phi_i)$. Also, notice that by appealing to the $L^2(D)$ orthogonality of $\phi_j$ this methods becomes very cost effective when the refractive index is constant in $D$. 
For the examples presented this method is implemented in \texttt{MATLAB} where the `\texttt{eig}' command is used to solve \ref{g-eig}. To compute the Galerkin matrices we employ a 2d Gaussian quadrature scheme. 

\subsection{Comparison to Separation {\color{black}of} Variables}\label{eig-approx}

In this section, we will compare our approximation to the analytically computed eigenvalue for the unit disk. To do so, assuming that $\Omega=D$ is given by the unit disk in $\R^2$ then in \cite{Stek-invscat} we have that for $n$ constant that the eigenvalues can be determined by separation of variables. This gives that  
\begin{align}
\lambda = -k \sqrt{n} \frac{ \text{J}'_{m}(k \sqrt{n}) }{  \text{J}_{m}(k\sqrt{n}) } \quad \text{ for any $m\geq0$. } \label{sov-formula}
\end{align}

We will test the accuracy of the approximation by comparing it to the values given by \eqref{sov-formula}. In our examples, we will take $n$ to be real and complex-valued constant to show that the approximation is valid for either case. Also, for all our numerical experiments in this and the following section, we will take the wavenumber $k=1$ for simplicity. In Tables \ref{conv1} and \ref{conv2}, we present the approximated eigenvalue $\lambda_{1,N}$ for various degrees of freedom $N$ as well as the relative error. 

\begin{table}[ht!]
\centering  
\begin{tabular}{  c | c | c | c | c  } 
\hline                  
       &  $N=10$  &  $N=15$ & $N=20$ & $N=25$ \\ [0.5ex] 
\hline                  
\hline                  
 $\lambda_{1,N}$ & $1.1872162$  & $1.2500365$   & $1.2816379$   & $1.3007182$ \\ [0.5ex]
Rel. Error              & $0.1378900$  & $0.0922724$   & $0.0693246$   & $0.0554693$ \\ [0.5ex]
\hline 
\end{tabular}
\caption{The first approximated eigenvalue for various $N$ with $n=2$ to demonstrate the convergence to $\lambda_1 =1.3771053$ as $N \to \infty$. }\label{conv1}
\end{table}

\begin{table}[ht!]
\centering  
\begin{tabular}{  c | c | c | c | c  } 
\hline                  
       & $N=10$  &  $N=15$ & $N=20$ & $N=25$ \\ [0.5ex] 
\hline                  
\hline                  
 $\lambda_{1,N}$ &  
    $\begin{array}{l} 1.10178 \\  \,\,\, +0.70628\text{i} \end{array}$  
 & $\begin{array}{l} 1.12973 \\  \,\,\, +0.77689\text{i} \end{array}$   
 & $\begin{array}{l} 1.14240 \\  \,\,\, +0.81262\text{i} \end{array}$   
 & $\begin{array}{l} 1.14957 \\  \,\,\, +0.83424\text{i} \end{array}$ \\ [2ex]
Rel. Error             &  $0.1519793$  & $0.1011953$   & $0.0758263$   & $0.0605793$ \\ [0.5ex]
\hline 
\end{tabular}
\caption{The first approximated eigenvalue for various $N$ with $n=2+\text{i}$ to demonstrate the convergence to $\lambda_1 =1.17422 + 0.92123\text{i}$ as $N \to \infty$.  }\label{conv2}
\end{table}

{\color{black} In Figure \ref{convplots}, the log-log convergence plots for the eigenvalues are presented which gives a convergence rate $\mathcal{O}(N^{-1})$ in the two examples. This would seem to suggest that the eigenfunctions are in $\mathscr{D} \big( \Delta^m \big)$ for $m=5/4$ by the convergence result in Theorem \ref{eig-convrate}. This give that the Fourier coefficients for the eigenfunction are $o(j^{-3/4})$ as $j \to \infty$. Due to the fact that $\phi_j$ are uniformly bounded in $D$ this implies that the eigenfunction is continuous in $D$ by the M-Test. }

\begin{figure}[ht!]
\centering
{\includegraphics[width=7.5cm]{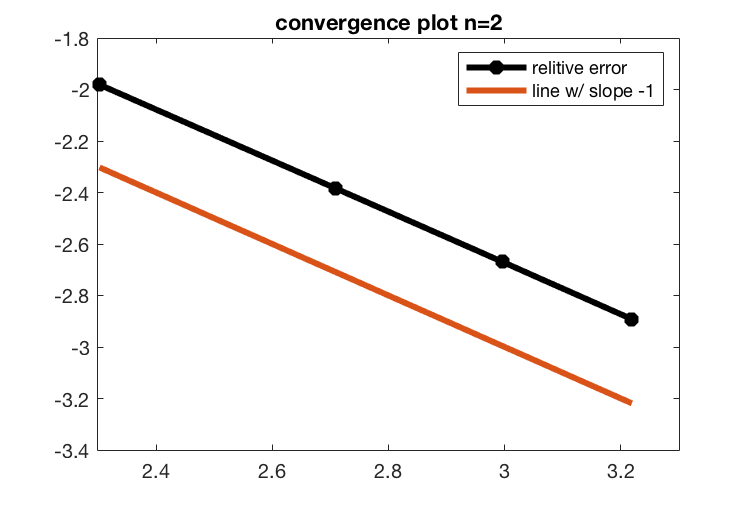}} \hspace{-0.5cm}
{\includegraphics[width=7.5cm]{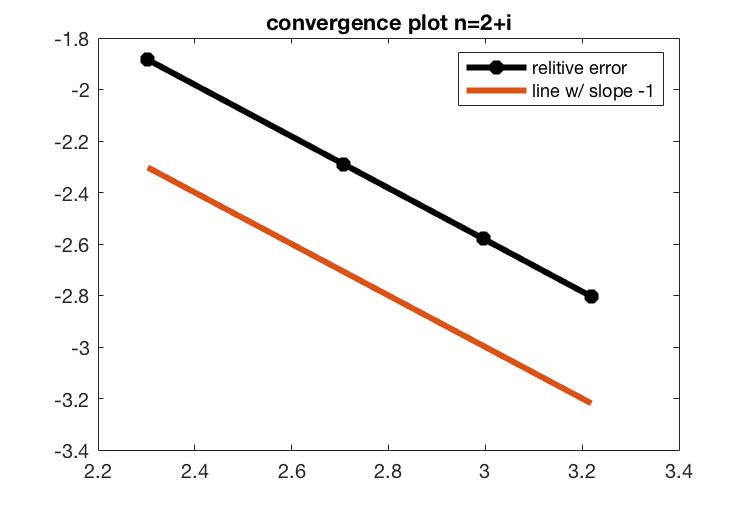}} 
\caption{Convergence plots of the first eigenvalues in the unit disk with the refractive indices $n=2$ and $n=2+\text{i}$. We compare with a line of slope $-1$ where we see 1st order convergence as $N \to \infty$.} \label{convplots}
\end{figure}

We will now show that our numerical scheme is valid for a piecewise constant refractive index in $D$. To this end, assume that $D$ is the unit disk and the scatterer $\Omega$ is given by the disk with radius $0<\rho < 1$. Now, define the refractive index  
$$n =  \begin{cases} \,\, 1, \quad \quad \quad\quad \,\,  \rho <r  \\ \,\, (1+n_1)^2, \quad  r \leq \rho \end{cases}$$
where $n_1$ is a positive constant. In \cite{Stek-invscat} it is shown using separation of variables and the asymptotic expansions of the Bessel function's that the first eigenvalue is given by the following expansion for $k\rho \ll 1$
\begin{align}
\lambda_1 = -  k  \frac{ \text{J}'_0(k) }{  \text{J}_0(k) } +  \frac{1}{2} n_1 (2+n_1)(k\rho)^2 + \mathcal{O} \left((k \rho)^4 \right). \label{asym-formula}
\end{align}
Using this expansion for the first eigenvalue we will compare with our numerically approximated eigenvalue. 
In Table \ref{compare3}, we report the eigenvalues computed for $\rho=1/2^{p}$ by the approximation with $N=25$ as well as the values from the first two terms in the asymptotic expansion \eqref{asym-formula} {\color{black}and the exact eigenvalues computed via separation of variables} for various values of $p$. Notice, that our approximation is valid {\color{black}for this example of a piecewise constant $n$} when $\rho \ll 1$ with $N=25$ where as a finite element method would require a large amount of degrees of freedom to assure accuracy. 

\begin{table}[ht!]
\centering  
\begin{tabular}{  c | c | c | c  } 
\hline                  
          & Approximation  $\lambda_{1,N}$  &  Asymptotic Formula &  {\color{black}Exact Eigenvalue}    \\ [0.5ex] 
\hline                  
\hline                  
   $p=1$&  $0.780984210069194$    &   $0.700080915004306$   &   $0.763513625502361$ \\  [0.5ex]
   $p=2$&  $0.617530179557115$    &   $0.606330915004306$   &   $0.615333593156268$\\  [0.5ex]
   $p=3$&  $0.581111365230462$     &   $0.582893415004306$   &   $0.584679376860770$\\  [0.5ex]
   $p=4$&  $0.565820243626941$    &   $0.577034040004306$   &   $0.577444105677795$ \\  [0.5ex]
\hline 
\end{tabular}
\caption{Comparison with the asymptotic formula \eqref{asym-formula} {\color{black}and the exact eigenvalue} for $n=2$ where the scatterer is given by the disk with $\rho=1/2^{p}$ for $p=1,2,3,4$. }\label{compare3}
\end{table}

\subsection{Parameter Estimation}\label{param-approx}
In this section, we provide a new algorithm for estimating the (real-valued) refractive index from the knowledge of the inverse scattering Steklov eigenvalues. It has been shown in \cite{Aniso-stekloff,Stek-invscat} that the Steklov eigenvalues can be recovered from the knowledge of the far-field data via the Linear Sampling Method and Generalized Linear Sampling Method. In \cite{Stekinvproblem} the eigenvalues are recovered from near-field measurement by using the Reciprocity Gap Method. Therefore, for simplicity, we will use the eigenvalues computed by the Neumann-Galerkin approximation as a stand-in for the eigenvalues computed from the data and we wish to estimate $n$. 

To begin, we present the numerical approximation of the Steklov eigenvalues and eigenfunctions for a variable refractive index $n$ {\color{black}with $N=25$}. The eigenvalues presented here will be used in the approximation of $n$. In Table \ref{variable}, we report the first three eigenvalues where the scatterer is either the unit disk and disk with radius $\rho=1/2$. We take the refractive index $n=2+r(\sin\theta-\cos\theta)$ in both cases. Since we have also proven the convergence of the eigenfunctions, we also provided the contour plots for the first three eigenfunctions associated with the eigenvalues in Figure \ref{eigfunc1}.

\begin{table}[H]
\centering  
\begin{tabular}{  c | c | c | c  } 
\hline                  
Disk w/ radius $\rho$  & 1st eigenvalue $\lambda_{1,N}$  & 2nd eigenvalue $\lambda_{2,N}$  & 3rd eigenvalue  $\lambda_{3,N}$  \\ [0.5ex] 
\hline                  
\hline                  
$\rho=1$     & $1.33947280348$ &   $-0.47739381775$   & $-1.75712435055$  \\ [0.5ex]
$\rho=1/2$  & $0.78174886356$ &   $-0.74001156781$   & $-1.95378594455$  \\  [0.5ex]
\hline 
\end{tabular}
\caption{The first three eigenvalues for two different scatterers where the refractive index is given by $n=2+r(\sin\theta-\cos\theta)$. }\label{variable}
\end{table}

\begin{figure}[ht!]
\centering
{\includegraphics[width=4.8cm]{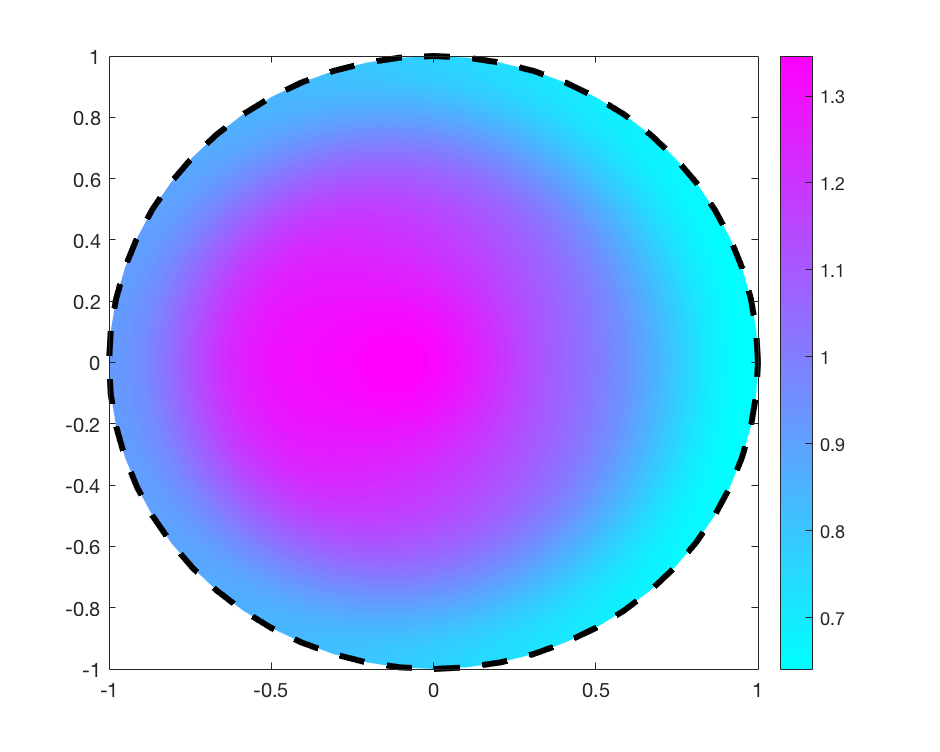}} 
{\includegraphics[width=4.8cm]{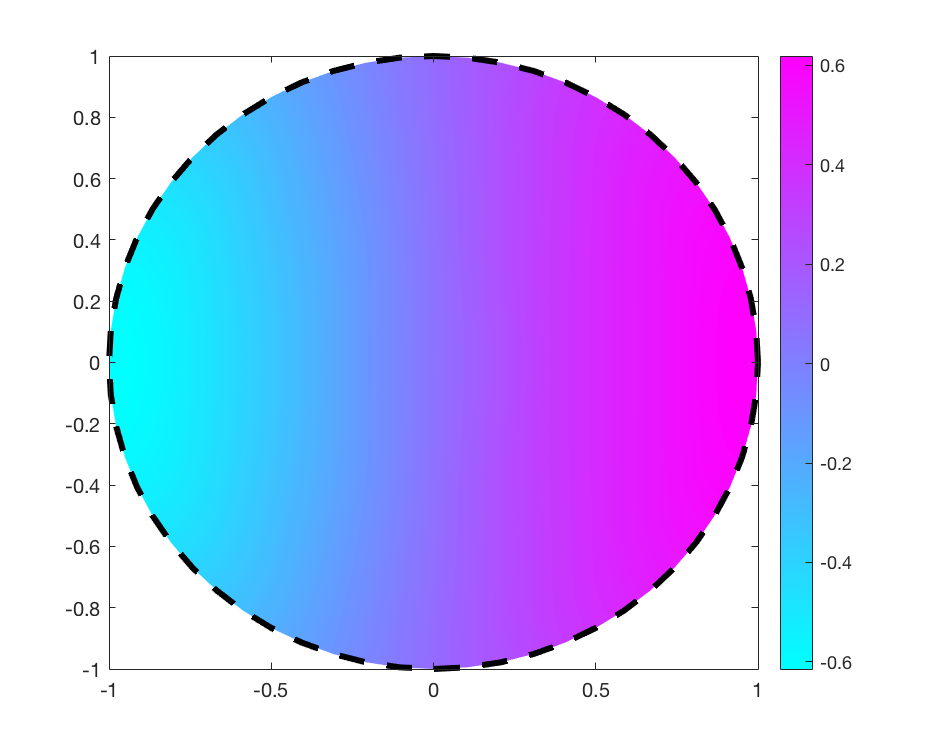}} 
{\includegraphics[width=4.8cm]{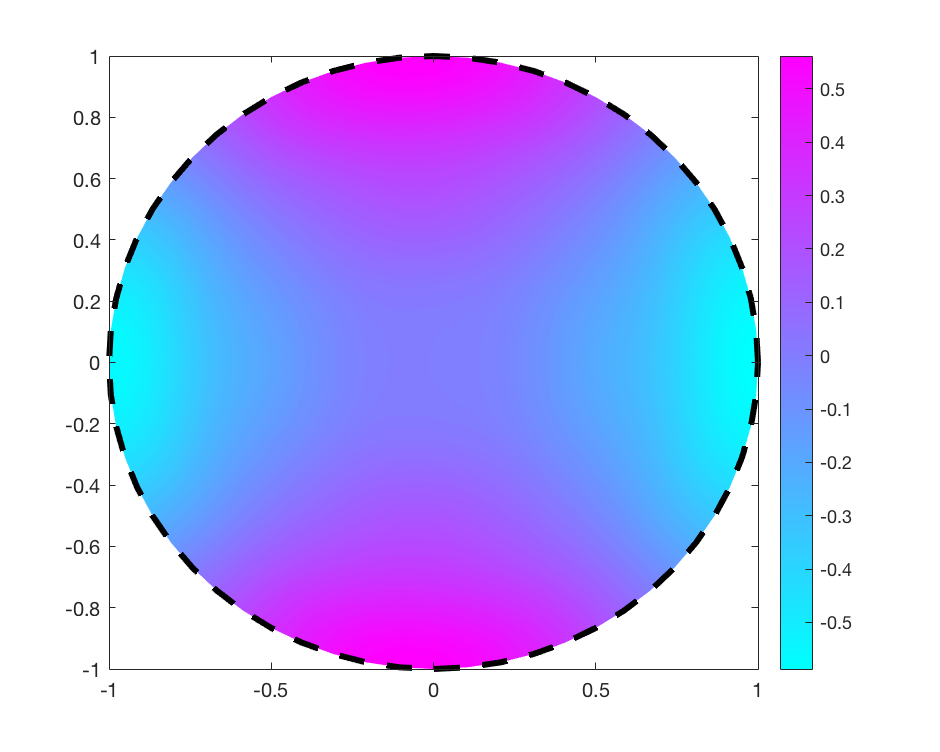}}\\
{\includegraphics[width=4.8cm]{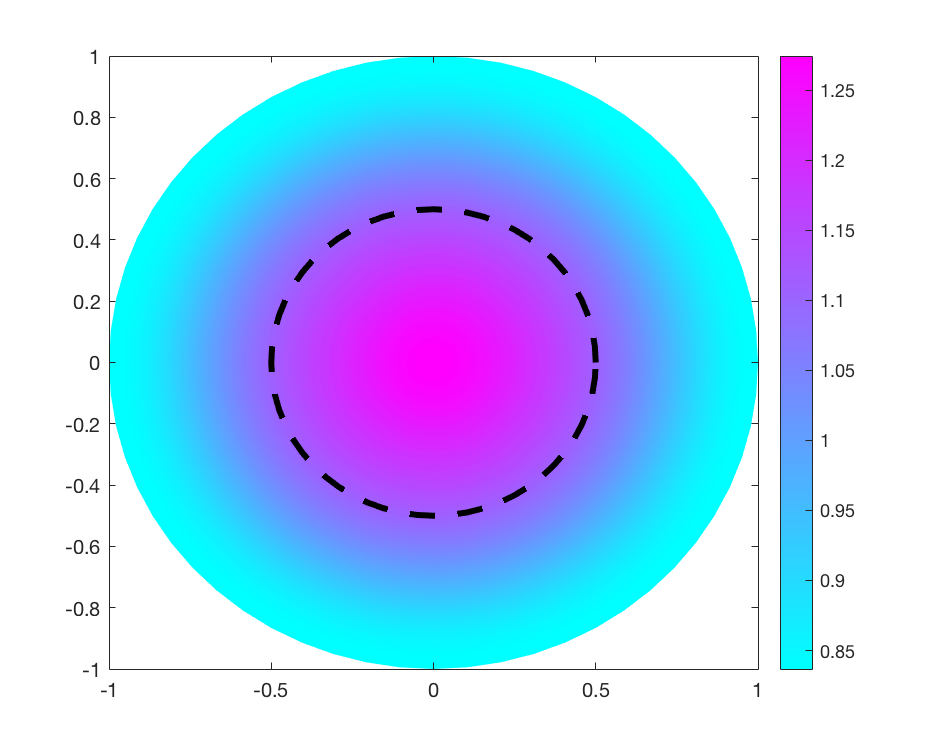}} 
{\includegraphics[width=4.8cm]{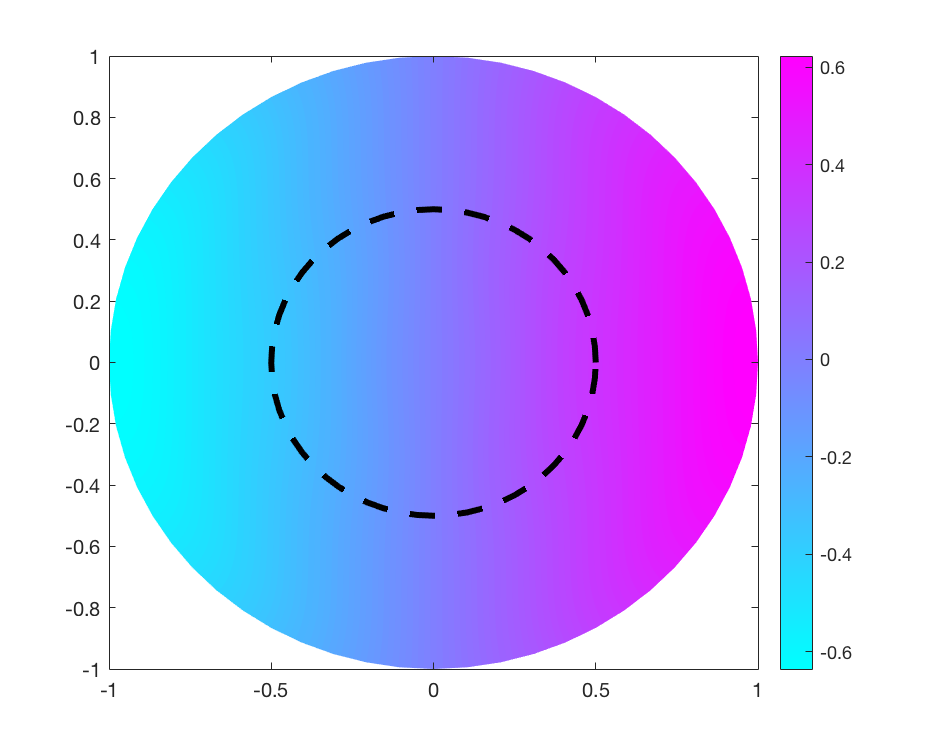}} 
{\includegraphics[width=4.8cm]{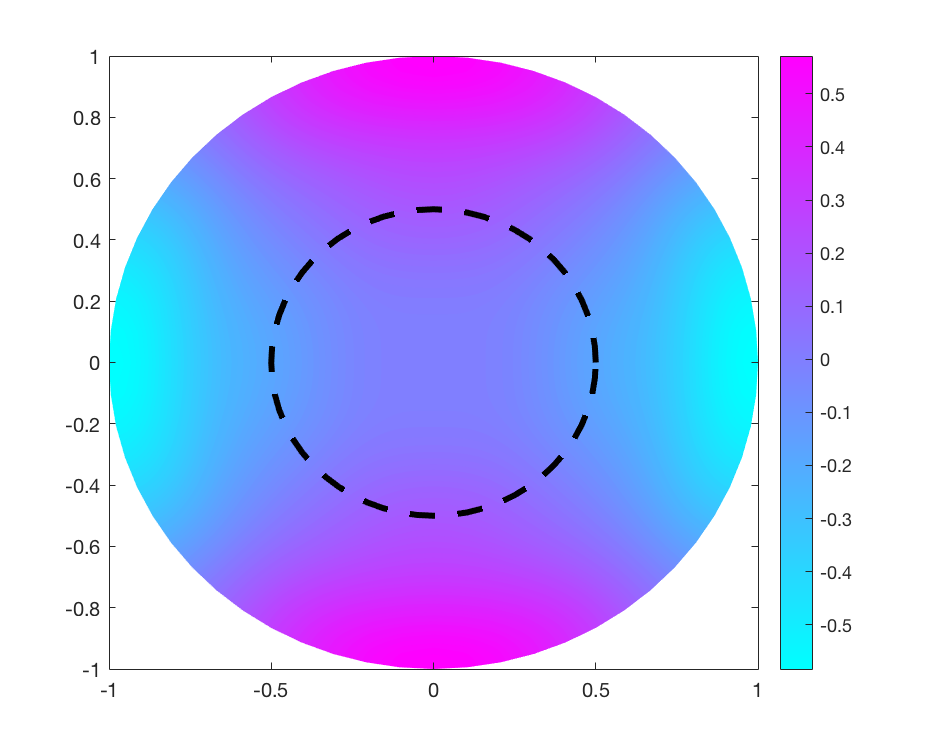}}
\caption{Plots of the first three eigenfunctions for the unit disk and disk with radius $1/2$ scatterers with refractive index $n=2+r(\sin\theta-\cos\theta)$. The dotted line is the boundary of the scatterer.} \label{eigfunc1}
\end{figure}

Now, we report the approximated eigenvalues for a constant refractive index where the scatterer $\Omega \neq D$ {\color{black}for $N=25$}. We consider the boundary of the scatterer to be given in polar coordinates such that 
$$\partial \Omega = \rho(\theta) (\cos\theta, \sin\theta )$$ 
with $0<\rho(\theta) < 1$ is a $2\pi$-periodic function. Here we consider a pear, elliptical, and rounded-square shaped scatterer given by 
\begin{eqnarray*}
\rho(\theta) &=& 0.3 (2+0.3\cos(3\theta) ), \\
\rho(\theta) &=& 0.35 (2+0.3\sin(2\theta) ) \quad \text{and,} \\
\rho(\theta) &=& 0.75 \big( |\sin(\theta)|^5 +  |\cos(\theta)|^5 \big)^{-1/5} 
\end{eqnarray*}
respectively. The eigenvalues are reported in Table \ref{pw} and the associated eigenfunctions are plotted in Figure \ref{eigfunc2}.

\begin{table}[ht!]
\centering  
\begin{tabular}{  c | c | c | c } 
\hline                  
Scatterer  & 1st eigenvalue $\lambda_{1,N}$  & 2nd eigenvalue $\lambda_{2,N}$  & 3rd eigenvalue  $\lambda_{3,N}$  \\ [0.5ex] 
\hline                  
\hline                  
Pear-Shaped &      $0.89339093521$ &   $-0.70841945488$   & $-1.94018366846$  \\[0.5ex]
Elliptical-Shaped & $0.97880829577$ &   $-0.67854111485$   & $-1.93207985011$  \\[0.5ex]
Rounded-Square & $1.11759427187$ &   $-0.60744622788$   & $-1.90328635229$  \\[0.5ex]
\hline 
\end{tabular}
\caption{ The first three eigenvalues for three different scatterers with $n=2$. }\label{pw}
\end{table}

\begin{figure}[ht!]
\centering
{\includegraphics[width=4.8cm]{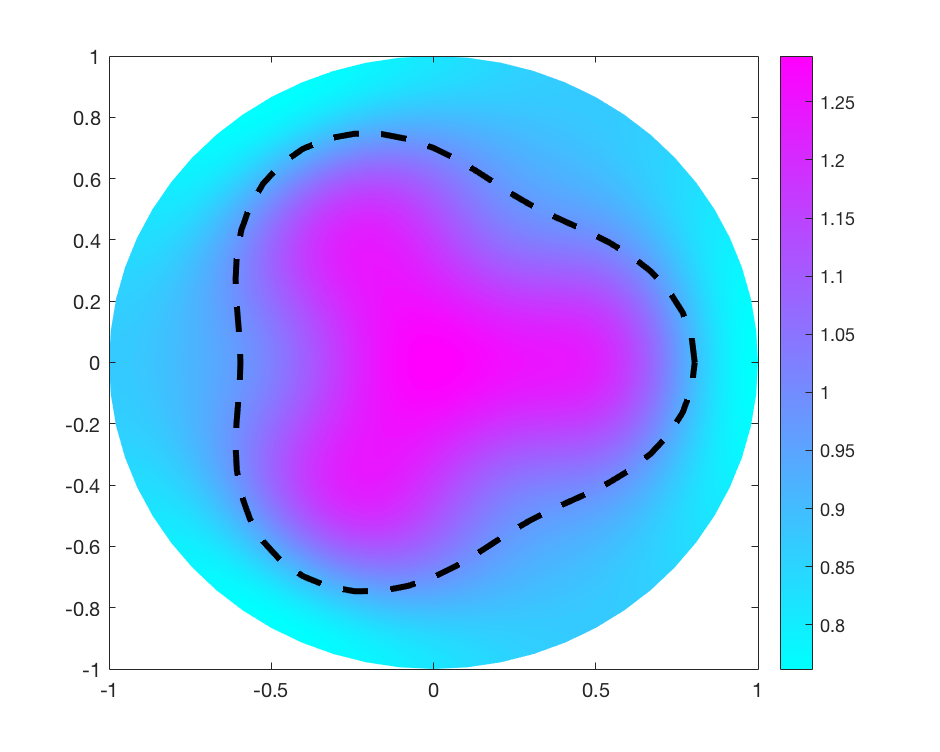}} 
{\includegraphics[width=4.8cm]{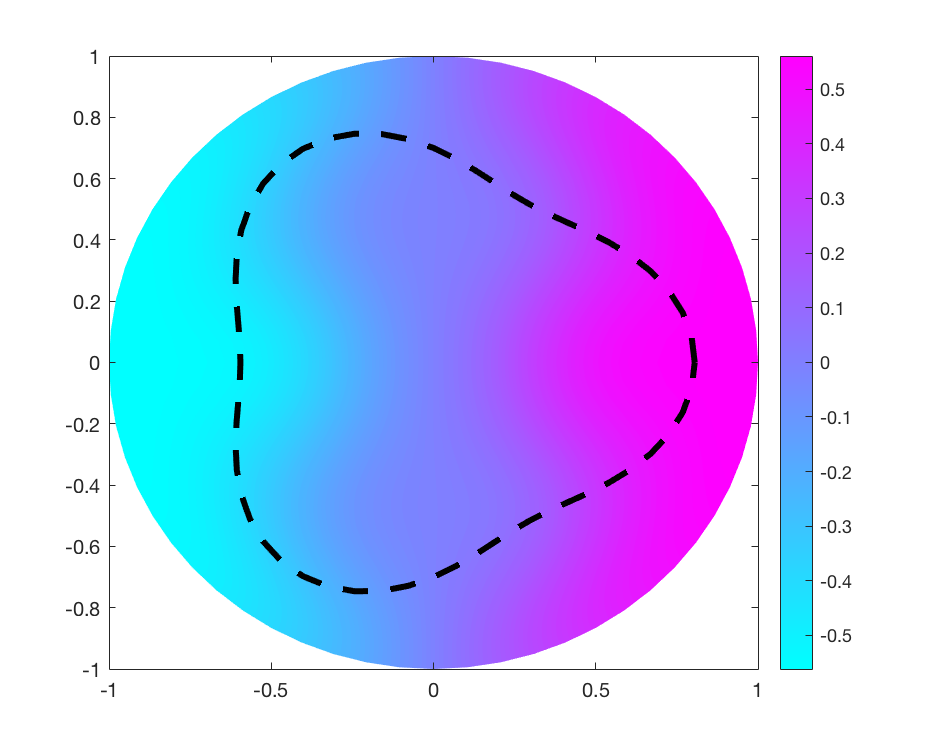}} 
{\includegraphics[width=4.8cm]{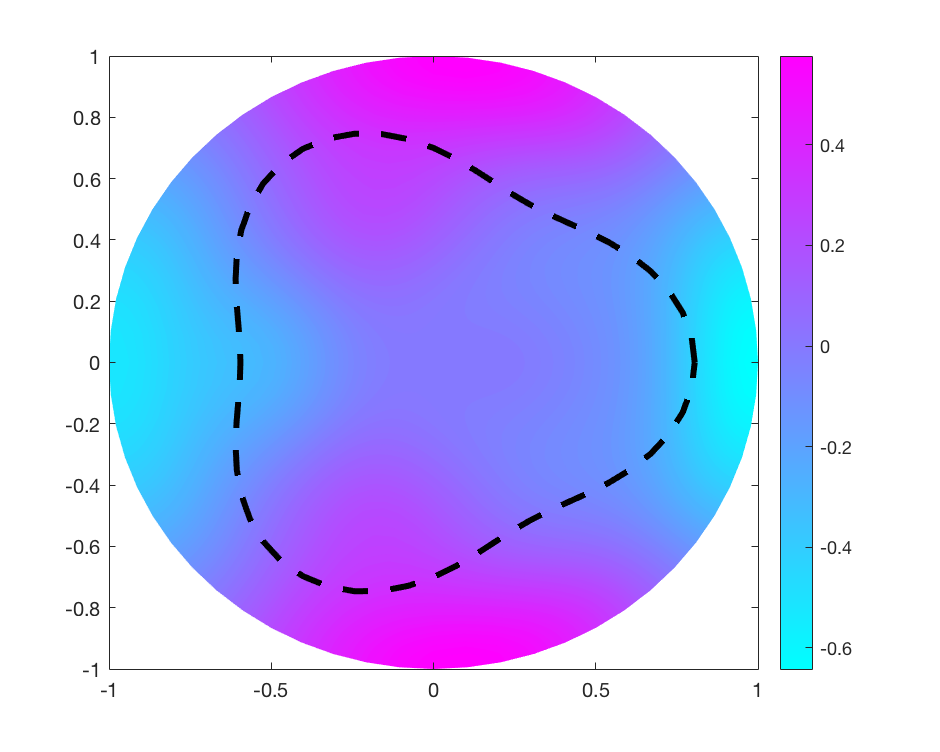}}\\
{\includegraphics[width=4.8cm]{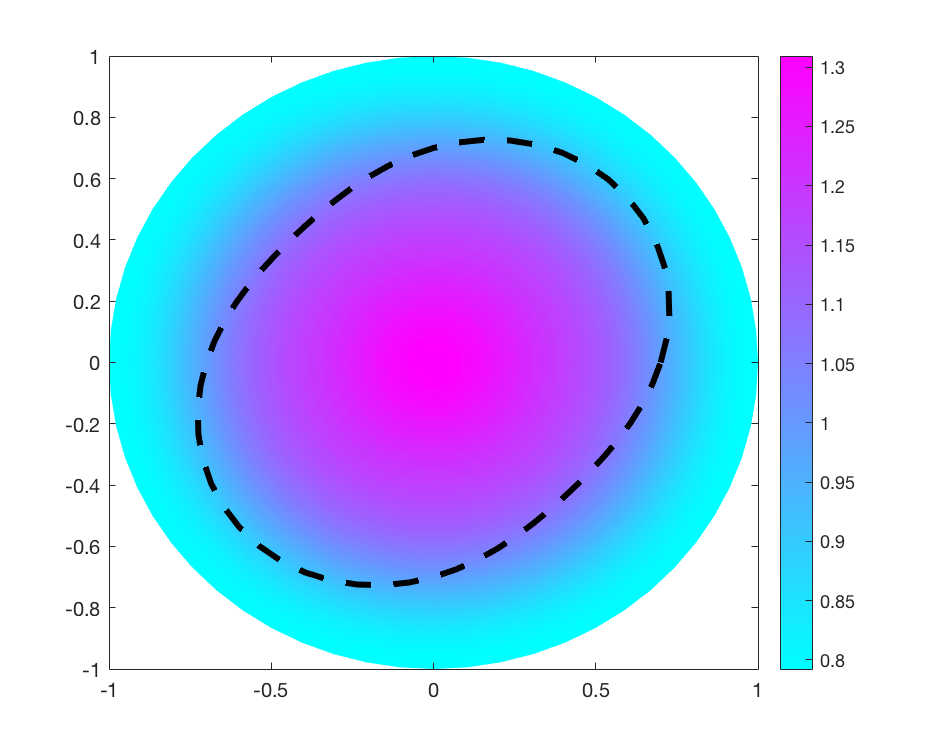}} 
{\includegraphics[width=4.8cm]{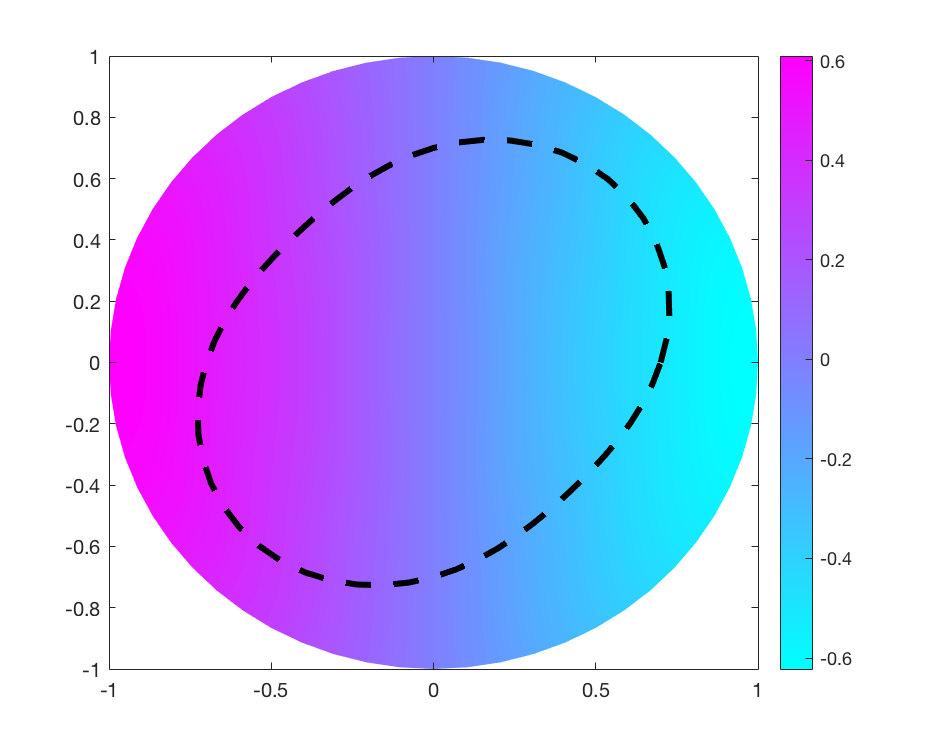}} 
{\includegraphics[width=4.8cm]{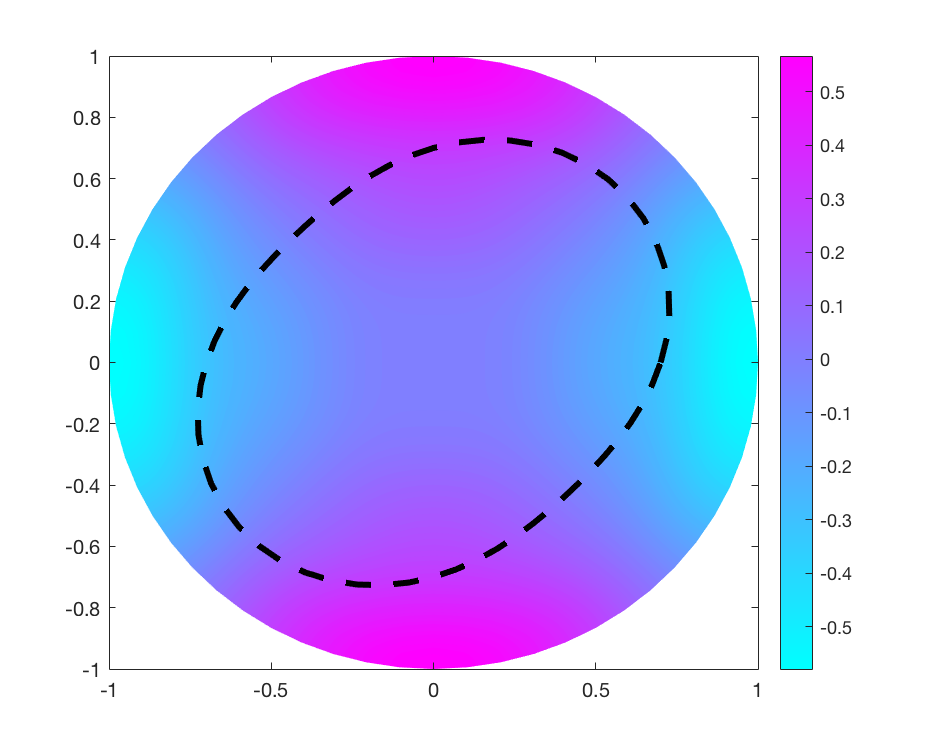}}\\
{\includegraphics[width=4.8cm]{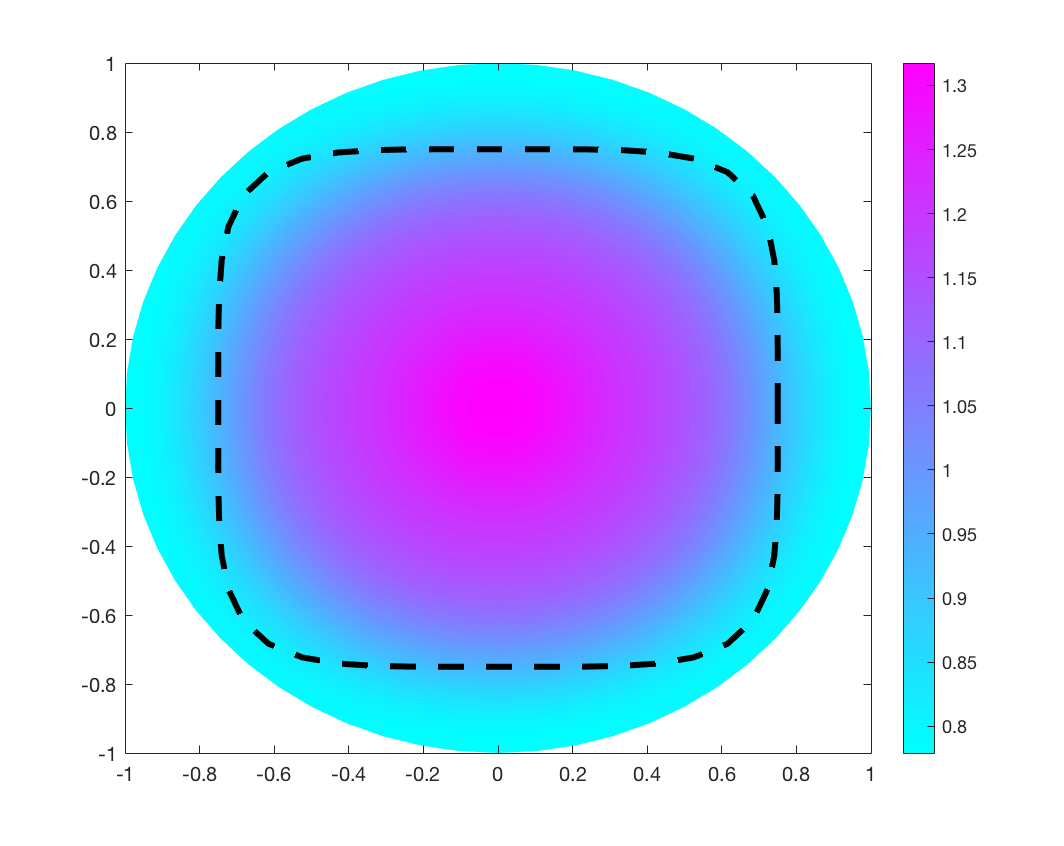}} 
{\includegraphics[width=4.8cm]{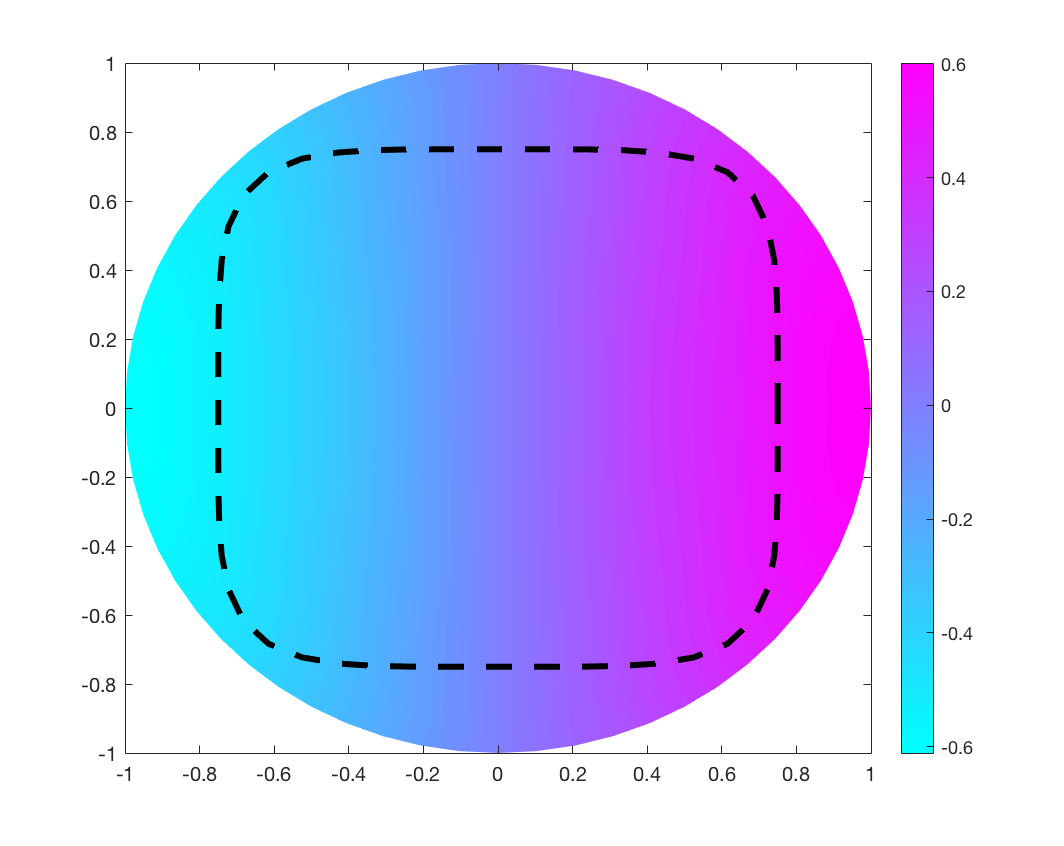}} 
{\includegraphics[width=4.8cm]{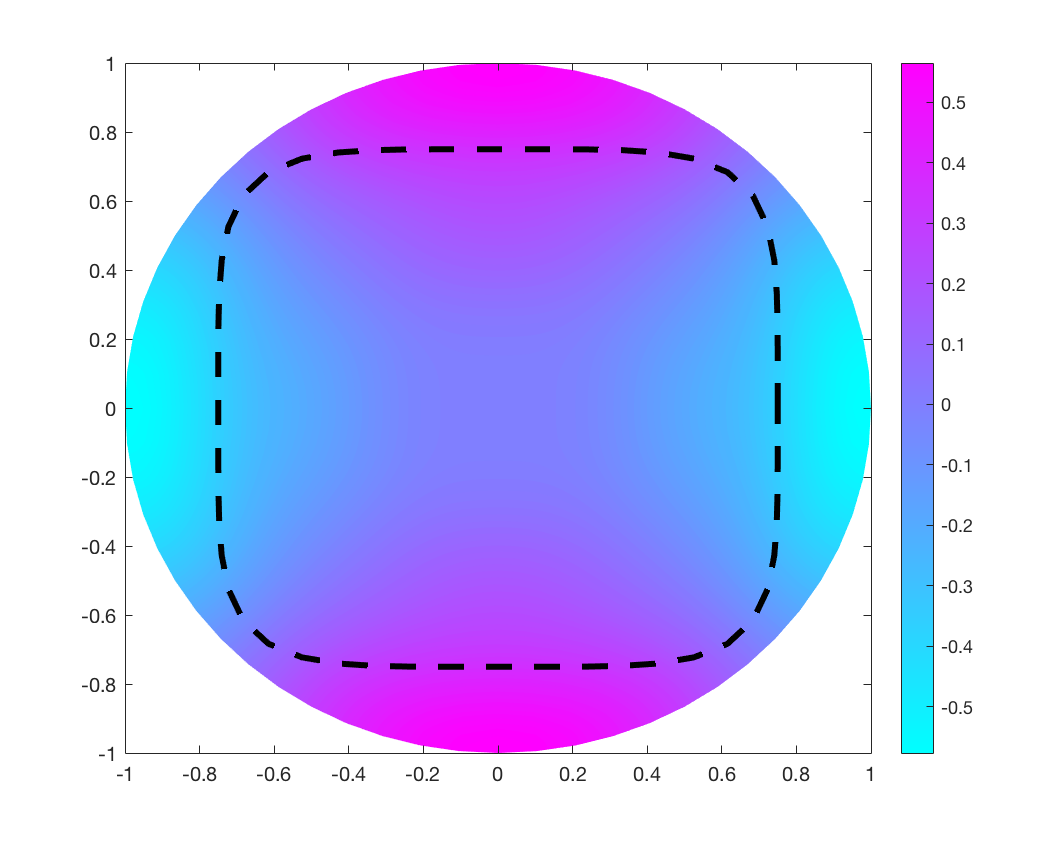}}
\caption{Plots of the first three eigenfunctions for the pear, elliptical and rounded-square shaped scatterer with $n=2$. The dotted line is the boundary of the scatterer.} \label{eigfunc2}
\end{figure}


Lastly, we turn our attention to estimating the refractive index. To this end, we will assume that the scatterer $\Omega$ is known and begin with the case when $D=\Omega$. The method proposed here is to approximate $n$ by a positive constant. Therefore, in order to approximate $n$ we find the unique value $n_{\text{approx}}>0$ satisfying 
\begin{align} \label{findn1}
\lambda_{1}(n_{\text{approx}}) = \lambda_{1,N}(n) 
\end{align}
where  $\lambda_{1}$ is given by equation \eqref{sov-formula} for $m=0$.
 To solve the above transcendental equation \eqref{findn1} we use the `\texttt{fzero}' command in \texttt{MATLAB}. As we see in our examples the approximation $n_{\text{approx}}$ seems to be the average value of $n$ in $D$ just as the case for the transmission eigenvalues \cite{te-homog,ziteHarris}. Therefore, we will assume that the solution to \eqref{findn1} approximates the average value of $n$ over $D$. Since we know a priori that $n=1$ in $D \setminus \overline{\Omega}$ we can use a two step process to estimate $n$ when $\Omega \neq D$. 
\begin{itemize}
\item  Step 1: Solve \eqref{findn1} to determine an initial $n_{\text{approx}}>0$. 
\item  Step 2: Define the new approximation $n_{\text{approx},2}$ such that $n = 1$ for $x \in D \setminus \overline{\Omega}$ and $n=n_{\text{approx},2}$ for $x \in{\Omega}$ where the constant $n_{\text{approx},2}$ is given by  
\begin{align} \label{findn2}
n_{\text{approx},2} = \frac{ n_{\text{approx}}\big|D\big| -\big|D \setminus \overline{\Omega} \big| }{\big|{\Omega} \big|}.
\end{align}
\end{itemize} 
Here $| \cdot |$ denotes the area of a Lebesgue measurable set in $\R^2$. Equation \eqref{findn2} is obtained by the assumption that the initial estimate $n_{\text{approx}}$ is the average value of $n$ in $D$. This method is implemented for the eigenvalues presented in Tables \ref{variable} and \ref{pw} where the approximations of the refractive index $n$ are reported in Table \ref{recon}. {\color{black} Here we see that $n_{\text{approx},2}$ approximates the average value of the refractive index $n$ in the scatterer $\Omega$ as one would expect just as in case of using the transmission eigenvalues.}

\begin{table}[ht!]
\centering  
\begin{tabular}{  c | c | c   } 
\hline                  
      Scatterer    & Refractive Index $n$  &  Approximation $n_{\text{approx},2}$    \\ [0.5ex] 
\hline                  
\hline                  
  Disk w/ $\rho=1$      &  $n=2+r(\sin\theta-\cos\theta)$    &   $1.961032$   \\  [0.5ex]
  Disk w/ $\rho=1/2$   &  $n=2+r(\sin\theta-\cos\theta)$    &   $2.181511$   \\  [0.5ex]
  Disk w/ $\rho=1$      &  $n=2$     				      &   $1.920193$   \\  [0.5ex]
  Pear-Shaped            &  $n=2$     				      &   $1.894312$   \\  [0.5ex]
  Elliptical-Shaped	  &  $n=2$  				      &   $2.111828$   \\  [0.5ex]
  Rounded-Square      &  $n=2$    				      &   $2.053623$   \\  [0.5ex]

\hline 
\end{tabular}
\caption{Here we approximate the refractive index $n$ by a constant in the scatterer $\Omega$ for multiple shapes. {\color{black} The average value in $\Omega$ for each refractive index $n$ is equal two where we see that $n_{\text{approx},2}$ approximates the average value in $\Omega$.}}\label{recon}
\end{table}

\section{Summary and Conclusions}\label{last}
In conclusion, we have provided a numerical method for computing the inverse acoustic scattering Steklov eigenvalues via the Neumann spectral-Galerkin approximation method. The approximation space is taken to be the span of the first $N$ Neumann eigenfunctions for the Laplacian in $D$. The analysis presented here is valid for any chosen auxiliary domain $D$ with a  piece-wise smooth boundary $\partial D$ for $d=2$, 3. In particular, the domain $D$ can also be taken as a square/cube centered at the origin to reduce the computational cost of evaluating Bessel functions. One needs to have computed the Neumann eigenfunction for the domain $D$ to employ this method. In the application of inverse scattering that is the focus of this paper, the domain $D$ can be chosen to be a disk that contains the scatterer. Since the Neumann eigenfunctions for a disk or square are well known via separation for variables this method can be always be applied for this problem. In our examples, we see that the approximation is still accurate for a modest size system even when the scatterer $\Omega$ is small in comparison to $D$ which is not the case for the finite element method. We have also presented numerical examples to investigate estimating the refractive index from the first eigenvalue. Another possible application of this method is to use the Neumann spectral-Galerkin method to compute the inverse scattering Trace Class Stekloff eigenvalues studied in \cite{Stektraceclass}. This is a new modified Stekloff eigenvalue problem whose numerical approximation by Galerkin methods has not been rigorously analyzed. Also, for multiple scatterers, one can try and augment the method presented in \cite{gpinvtev} to recover the refractive index.


\end{document}